\newcommand{\R}{\mathbb{R}}
\newcommand{\N}{\mathbb{N}}
\newcommand{\C}{\mathbb{C}}
\newcommand{\Z}{\mathbb{Z}}
\newcommand{\eps}{\varepsilon}
\newcommand{\ra}{\rangle}
\newcommand{\la}{\langle}
\newcommand{\del}{\partial}
\newcommand{\diam}{\mathrm{diam}}
\newcommand{\id}{\mathrm{id}}
\newcommand{\per}{\mathrm{per}}
\def\ri{{\rm i}}%
\renewcommand{\Re}{\mathrm{Re}}
\def\Xint#1{\mathchoice
   {\XXint\displaystyle\textstyle{#1}}%
   {\XXint\textstyle\scriptstyle{#1}}%
   {\XXint\scriptstyle\scriptscriptstyle{#1}}%
   {\XXint\scriptscriptstyle\scriptscriptstyle{#1}}%
   \!\int}
\def\XXint#1#2#3{{\setbox0=\hbox{$#1{#2#3}{\int}$}
     \vcenter{\hbox{$#2#3$}}\kern-.5\wd0}}
\def\meanint{\Xint-}
\newtheorem{theorem}{Theorem}[section]
\newtheorem{lemma}[theorem]{Lemma}
\newtheorem{corollary}[theorem]{Corollary}
\newtheorem{assumption}[theorem]{Assumption}
\newtheorem{remark}[theorem]{Remark}
\numberwithin{equation}{section}
\title{Bloch-wave homogenization on large time scales and dispersive
  effective wave equations}
 \author{T.\,Dohnal, A.\,Lamacz, B.\,Schweizer\thanks{Technische
     Universit\"at Dortmund, Fakult\"at f\"ur Mathematik,
     Vogelpothsweg 87, D-44227 Dortmund, Germany.}}
\begin{document}

\maketitle

\begin{abstract}
  We investigate second order linear wave equations in periodic media,
  aiming at the derivation of effective equations in $\R^n$, $n \in
  \{1, 2, 3\}$. Standard homogenization theory provides, for the limit
  of a small periodicity length $\eps>0$, an effective second order
  wave equation that describes solutions on time intervals $[0,T]$. In
  order to approximate solutions on large time intervals
  $[0,T\eps^{-2}]$, one has to use a dispersive, higher order wave
  equation. In this work, we provide a well-posed, weakly dispersive
  effective equation, and an estimate for errors between the solution
  of the original heterogeneous problem and the solution of the
  dispersive wave equation.  We use Bloch-wave analysis to identify a
  family of relevant limit models and introduce an approach to select
  a well-posed effective model under symmetry assumptions on the
  periodic structure. The analytical results are confirmed and
  illustrated by numerical tests.
\end{abstract}

 \medskip
  {\bf Keywords:}
   homogenization, wave equation, weakly dispersive model,
   Bloch-wave expansion

  \medskip
  {\bf MSC:} 35B27, 35L05

\pagestyle{myheadings} 
\thispagestyle{plain} 

\markboth{T.\,Dohnal, A.\,Lamacz, B.\,Schweizer}{Homogenization and
  dispersive effective wave equations}

\section{Introduction}

The wave equation describes wave propagation in very different
applications, ranging from elastic waves to electro-magnetic waves. In
some applications, it is of interest to describe waves in periodic
media, where the period $\eps>0$ is much smaller than the
wave-length. The most fundamental questions regard the effective wave
speed and the dispersive behavior due to the heterogeneities.

We concentrate on the simplest model, the second order wave equation
in divergence form. For notational convenience, we restrict ourselves
to a unit density coefficient and study, for $x\in \R^n$, the wave
equation
\begin{equation}
  \label{eq:eps-wave}
  \del_t^2 u^\eps(x,t) = \nabla\cdot (a^\eps(x) \nabla u^\eps(x,t))\,.
\end{equation}
The medium is characterized by a positive coefficient matrix $a^\eps
:\R^n \to \R^{n\times n}$.  We are interested in periodic media with a
small periodicity length-scale $\eps>0$, and assume that $a^\eps(x) =
a_Y(x/\eps)$, where $a_Y:\R^n\to \R^{n\times n}$ is periodic.  The
wave equation is complemented with the initial condition
\begin{equation}
  \label{eq:initial}
  u^\eps(x,0) = f(x), \quad \del_t u^\eps(x,0) = 0\,.
\end{equation}

\begin{assumption}\label{ass:a-f}
  On the initial data we assume that $f\in L^2(\R^n)\cap L^1(\R^n)$
  has the Fourier representation
  \begin{equation}
    \label{eq:f-cond}
    f(x) = \frac1{(2\pi)^{n/2}} \int_{\R^n} F_0(k)\, e^{+\ri k\cdot x}\, dk\,,
  \end{equation}
  where the function $F_0:\R^n\to \C$ is supported on the compact set
  $K\subset\subset \R^n$.

  On the coefficient $a_Y:\R^n\to \R^{n\times n}$ we assume
  $Y$-periodicity for the cube $Y := (-\pi, \pi)^n \subset \R^n$ and
  the regularity $a_Y\in C^1(\R^n,\R^{n\times n})$. Moreover, we
  assume that $a_Y(y)$ is a symmetric and positive definite matrix
  field: for some $\gamma>0$ there holds $(a_Y(y))_{ij}=(a_Y(y))_{ji}$
  for all $i,j\in\{1,...,n\}$ and $\sum_{i,j=1}^n
  (a_Y(y))_{ij}\xi_i\xi_j\geq\gamma |\xi|^2$ for every $y\in\R^n$ and
  all $\xi\in\R^n$.
		
  The set $Y\subset \R^n$, the reciprocal cell $Z := (-1/2,
  1/2)^n\subset \R^n$, and the support $K\subset \R^n$ are fixed data
  of the problem.
\end{assumption}

The Fourier transform is always understood in the sense of
$L^2(\R^n)$.  We note that $F_0$ is bounded because of $f\in
L^1(\R^n)$. Since $F_0$ has compact support, every derivative of $f$
is of class $L^2(\R^n)$, hence $f\in C^\infty(\R^n)$.  We will later
restrict ourselves to dimensions $n\le 3$, an assumption that is used
in Sobolev-embeddings. General dimensions can be treated under
stronger regularity assumptions on $a_Y$.

\smallskip The fundamental question of homogenization theory is the
following: For small $\eps>0$, can the solution $u^\eps$ be
approximated by a solution of an equation with constant coefficients?
The answer is affirmative: There exists an effective coefficient
matrix $A\in \R^{n\times n}$, computable from $a_Y$, such that the
following holds: on an arbitrary time interval $[0,T]$, if $w :
\R^n\times [0,T]\to \R$ is the solution of
\begin{equation}
  \label{eq:eps-wave-homogen}
  \del_t^2 w(x,t) = \nabla\cdot (A \nabla w(x,t))\,,\quad 
  w(x,0) = f(x), \quad \del_t w(x,0) = 0\,,
\end{equation}
there holds $u^\eps\to w$ as $\eps\to 0$. For the result and function
spaces see e.g.\,\cite{FrancfortMR1172450}.

We are interested in a refinement of this result. Our aim is to
investigate the behavior of solutions $u^\eps$ of \eqref {eq:eps-wave}
for large times, namely for all $t\in [0, T_0 \eps^{-2}]$ with
$T_0>0$. It is well-known that the homogenized equation \eqref
{eq:eps-wave-homogen} cannot provide an approximation of $u^\eps$ on
the interval $[0, T_0 \eps^{-2}]$. Instead, we need a dispersive
equation to approximate $u^\eps$.

\paragraph{Main result.} In addition to the coefficient matrix $A\in
\R^{n\times n}$, we will define $E\in \R^{n\times n}$ and $F\in
\R^{n\times n\times n\times n}$, computable from the coefficient
$a_Y(.)$ with the Bloch eigenvalue problem on the periodicity cell
$Y$. The constant coefficient matrices define linear spatial
differential operators: the two second order operators $A D^2 =
\sum_{i,j} A_{ij} \del_i \del_j$ and $E D^2 = \sum_{i,j} E_{ij} \del_i
\del_j$, and the fourth order operator $F D^4 = \sum_{i,j,m,l}
F_{ijml} \del_i \del_j \del_m \del_l$. The weakly dispersive effective
equation reads
\begin{equation}
  \label{eq:weakly-dispersive}
  \del_t^2 w^\eps = A D^2 w^\eps + \eps^2 E D^2 \del_t^2 w^\eps 
  - \eps^2 F D^4 w^\eps\,.
\end{equation}
As initial conditions we use once more $w^\eps(x,0) = f(x)$ and
$\del_t w^\eps(x,0) = 0$. Equation \eqref {eq:weakly-dispersive} is of
fourth order in the spatial variables, and it contains a term that
uses second spatial {\em and} second time derivatives. The operator
contains the small parameter $\eps>0$ explicitly. It can nevertheless
be regarded as an effective equation in the sense of homogenization
theory, since the coefficients are $x$-independent. Numerically,
\eqref {eq:weakly-dispersive} is much easier to solve than \eqref
{eq:eps-wave}, since the fine scale need not be resolved.  The
contributions of higher order (operators with factor $\eps^2$)
describe the (weak) dispersive effects due to the heterogeneity of the
medium. Formally, for $\eps=0$, we recover the homogenized equation
\eqref {eq:eps-wave-homogen}.

Our main result shows that the weakly dispersive equation \eqref
{eq:weakly-dispersive} provides, for large times, an approximation of
the original equation \eqref {eq:eps-wave}. To our knowledge, both
aspects of our theorem are new in dimension $n>1$: (i) the
specification of a well-posed weakly dispersive effective wave
equation and (ii) the rigorous proof of the homogenization error
estimate on large time scales.

\begin{theorem}\label{thm:main}
  Let $\eps = \eps_l \to 0$ be a sequence of positive numbers and $n
  \in \{1, 2, 3\}$ be the dimension. Let the medium $a_Y:\R^n\to
  \R^{n\times n}$ and the initial data $f: \R^n\to \R$ satisfy
  Assumption \ref {ass:a-f}. We assume that $y\mapsto a_Y(y)$ is
  symmetric under reflections $y_j \longleftrightarrow -y_j$, and
  symmetric under coordinate exchanges $y_j \longleftrightarrow y_k$,
  see \eqref {eq:sym_ass}.

  We use the coefficient matrices $A$ and $C$ defined in \eqref
  {eq:Taylor-mu}, $E$ and $F$ as defined in Lemma \ref
  {lem:decompose}.  Then the following holds:
  \begin{enumerate}
  \item {\bf Well-posedness} Equation \eqref {eq:weakly-dispersive}
    with initial condition \eqref {eq:initial} has a unique solution
    $w^\eps$ for all positive times (see Theorem \ref
    {thm:main-approximation} for function spaces).
  \item {\bf Error estimate} Let $w^\eps$ be the solution of \eqref
    {eq:weakly-dispersive}, and let $u^\eps$ be the solution of \eqref
    {eq:eps-wave} for the same initial condition \eqref
    {eq:initial}. Then, with a constant $C_0 = C_0(a_Y, T_0, f)$,
    there holds the error estimate
    \begin{equation}
      \label{eq:approx-result}
      \sup_{t\in [0,T_0 \eps^{-2}]} 
      \| u^\eps(.,t) - w^\eps(.,t) \|_{L^2(\R^n) + L^\infty(\R^n)} 
      \le C_0 \eps\,.
    \end{equation}
  \end{enumerate}
\end{theorem}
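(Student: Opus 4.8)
The plan is to prove the theorem via Bloch-wave analysis, representing both the exact solution $u^\eps$ and the approximate solution $w^\eps$ in Fourier/Bloch modes and comparing them mode-by-mode. First I would set up the Bloch decomposition of the operator $-\nabla\cdot(a_Y(x/\eps)\nabla\cdot)$: for each wave number $k$ in the (rescaled) reciprocal cell, there is a discrete family of Bloch eigenvalues $\mu_m^\eps(k)$ with eigenfunctions $\psi_m^\eps(x,k)$, and the solution with initial data $f$ (supported in Fourier space on the compact set $K$) admits an expansion $u^\eps(x,t) = \sum_m \int \hat f_m^\eps(k)\cos(\sqrt{\mu_m^\eps(k)}\,t)\psi_m^\eps(x,k)\,dk$. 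Because $F_0$ is supported on the fixed compact set $K$, for $\eps$ small the relevant Bloch wave numbers $\eps k$ lie in a small neighborhood of the origin in $Z$, so only the first branch $m=0$ contributes at leading order, and the higher branches $m\ge 1$ are separated by a spectral gap; their contribution will be shown to be $O(\eps)$ in $L^2$ uniformly on $[0,T_0\eps^{-2}]$ by energy/orthogonality estimates (this uses $f\in L^1$ to control $F_0$ in $L^\infty$ and the compact support to control the measure of the relevant $k$-set).

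Next I would analyze the first Bloch branch. The key is the Taylor expansion of the first Bloch eigenvalue near $k=0$, which by the assumed reflection and coordinate-exchange symmetries of $a_Y$ contains no odd-order terms and takes the form $\mu_0^\eps(k) = \eps^{-2}\mu(\eps k)$ with $\mu(\kappa) = A\kappa\cdot\kappa - \eps^2 (\text{fourth-order term in }\kappa) + O(|\kappa|^6)$, where the fourth-order coefficient is exactly $C$ from \eqref{eq:Taylor-mu}. (The symmetry assumptions are what make this expansion even and what make the resulting effective tensors have the structure needed for well-posedness.) The dispersive equation \eqref{eq:weakly-dispersive} is, by the defining relation in Lemma \ref{lem:decompose} relating $E$ and $F$ to $A$ and $C$, precisely the equation whose Fourier symbol reproduces this expansion: the dispersion relation $\omega^2(1+\eps^2 E k\cdot k) = A k\cdot k - \eps^2 F k\cdot k\,k\cdot k$ rearranges to $\omega^2 = A k\cdot k - \eps^2 C k\cdot k\,k\cdot k + O(\eps^4|k|^6)$ on the support of $F_0$, so that $w^\eps$ has the mode representation $\int F_0(k)\cos(\omega^\eps(k)t)e^{ikx}\,dk$ with $\omega^\eps(k)^2$ matching $\mu_0^\eps(k)$ up to $O(\eps^4|k|^6)$. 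The $1+\eps^2 E k\cdot k$ factor with $E$ symmetric positive (semi)definite is what guarantees well-posedness: the symbol is real and the equation is of the expected hyperbolic type, so part (1) follows from standard Fourier-multiplier/semigroup theory in the energy space, which I would state precisely when I invoke Theorem \ref{thm:main-approximation}.

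Then I would estimate the error. Writing $u^\eps \approx \int F_0(k)\cos(\sqrt{\mu_0^\eps(k)}\,t)\psi_0^\eps(x,k)\,dk$ and using $\psi_0^\eps(x,k) = e^{ikx}(1 + O(\eps))$ (the first Bloch eigenfunction is close to a plane wave, with corrector of order $\eps$), the leading-order part of $u^\eps$ is $\int F_0(k)\cos(\sqrt{\mu_0^\eps(k)}\,t)e^{ikx}\,dk$ up to an $O(\eps)$ error. It remains to compare $\cos(\sqrt{\mu_0^\eps(k)}\,t)$ with $\cos(\omega^\eps(k)t)$. Since $|\sqrt{\mu_0^\eps(k)} - \omega^\eps(k)| \le C\eps^4|k|^5/\sqrt{A k\cdot k}$-type bounds give $|\sqrt{\mu_0^\eps(k)}-\omega^\eps(k)| \lesssim \eps^4 |k|^5$ on $K$ (away from $k=0$; near $k=0$ both frequencies vanish and $\cos$ is Lipschitz-$1$ so the difference is still controlled), the phase error accumulated over time $t\le T_0\eps^{-2}$ is $|t|\cdot O(\eps^4) = O(\eps^2)$, and since $|\cos a - \cos b|\le |a-b|$ this yields an $L^2$ error $O(\eps^2)$ from the phase mismatch; combined with the $O(\eps)$ eigenfunction-corrector error and the $O(\eps)$ higher-branch error, one obtains the bound $C_0\eps$. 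The $L^2 + L^\infty$ (rather than pure $L^2$) norm in \eqref{eq:approx-result} is used to absorb the part of the Bloch corrector that is merely bounded but not small in $L^2$ — one splits the corrector term into a genuinely $O(\eps)$ $L^2$-piece plus an $L^\infty$-piece controlled by $\|F_0\|_{L^1}$ and the $C^1$ regularity of $a_Y$.

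I expect the main obstacle to be the uniform-in-time control of the phase error on the long interval $[0,T_0\eps^{-2}]$, specifically obtaining the sharp $O(\eps^4|k|^{\text{something}})$ bound on $\sqrt{\mu_0^\eps(k)} - \omega^\eps(k)$ with constants uniform over the support $K$ and, crucially, handling the degeneracy at $k=0$ where the square root is not smooth. This requires careful analytic perturbation theory of the Bloch eigenvalue (justifying the even Taylor expansion to sufficiently high order with remainder estimates) and an argument — probably splitting the $k$-integral into a small ball around the origin and its complement — to show the low-frequency contribution is harmless. A secondary technical point is making rigorous the smallness of the higher Bloch branches uniformly on the long time scale, which relies on the spectral gap staying open and on energy conservation for the exact equation; this is where the restriction $n\le 3$ and the $C^1$ assumption on $a_Y$ enter through Sobolev embeddings used to bound the Bloch correctors and eigenfunctions.
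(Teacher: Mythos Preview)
Your proposal is correct and follows the same Bloch-wave architecture as the paper: expand $u^\eps$ in Bloch modes, discard the branches $m\ge 1$ at cost $O(\eps)$ in $L^2$ (the paper does this by rewriting the coefficient via the eigenvalue relation and bounding $\|\partial_t^2 u^\eps\|_{L^2}\le C\eps^{-1}$), replace the Bloch eigenfunction $\psi_0^\eps$ by the plane wave to pass to a pure Fourier integral $U^\eps$ (this is where $n\le 3$ and the $L^\infty$ part of the norm enter, via $H^2(Y)\hookrightarrow L^\infty(Y)$), and finally compare with $w^\eps$.

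The one genuine methodological difference is in that last step. The paper does \emph{not} write $w^\eps$ as a Fourier integral and compare phases directly. Instead it introduces an explicit intermediate function $v^\eps$ (the Fourier integral with the two-term Taylor phase $\sqrt{Ak^2}+\tfrac{\eps^2}{2}Ck^4/\sqrt{Ak^2}$), checks that $v^\eps$ solves the dispersive equation \eqref{eq:weakly-dispersive} with an $O(\eps^4)$ residual, proves well-posedness by a Galerkin/energy argument (not Fourier multipliers), and then uses the resulting a~priori estimate to get $\|\partial_t(v^\eps-w^\eps)\|_{L^2}+\|\nabla(v^\eps-w^\eps)\|_{L^2}\le C\eps^2$. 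A separate interpolation lemma then converts this derivative bound into $\|v^\eps-w^\eps\|_{L^2+L^\infty}\le C\eps$. Your direct phase comparison (using $|\omega^\eps(k)^2-\mu_0^\eps(k)|=O(\eps^4|k|^6)$ on $K$, hence $|\omega^\eps-\sqrt{\mu_0^\eps}|=O(\eps^4|k|^5)$ after dividing by the sum $\sim |k|$) is more elementary and bypasses both $v^\eps$ and the interpolation lemma; the paper's energy route is more robust and yields well-posedness in the stated function spaces without relying on the explicit Fourier solution formula.

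Two minor slips to fix when you write it out: the dispersion relation of \eqref{eq:weakly-dispersive} is $\omega^2(1+\eps^2 Ek^2)=Ak^2+\eps^2 Fk^4$ (plus, not minus, in front of $F$), which via the decomposition $-Ck^4=(Ek^2)(Ak^2)-Fk^4$ gives $\omega^2=Ak^2+\eps^2 Ck^4+O(\eps^4)$, matching \eqref{eq:Taylor-mu-eps} with a plus sign in front of $C$. Your description of where the $L^\infty$ part of the norm arises is right for your route (only the Bloch-corrector step needs it); in the paper it enters a second time through the interpolation lemma.
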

The definition of the norm in \eqref {eq:approx-result} is recalled at
the end of Section \ref {ssec.SanSym}.  The $L^2(\R^n)$-norm is a
result of the Bloch-wave expansion (it appears e.g.\,in Theorem
\ref{thm:SanSym-1}), while the $L^\infty(\R^n)$-norm appears in the
control of error terms after Theorem \ref {thm:SanSym-2}, but also in
energy estimates, see Lemma \ref {lem:interpolation}.

\subsection*{Comparison with the literature}

The derivation of effective equations in periodic homogenization
problems is an old subject \cite {Sanchez}, two-scale convergence
\cite {Allaire1992} is today the most relevant analytical tool. The
use of Bloch-wave expansions \cite {Wilcox} was explored only more
recently, see e.g.\,\cite {MR1897707, MR2219790, MR1484944}.

Compared to elliptic and parabolic equations, some distinctive
features are relevant in the analysis of the wave equation. One
observation of \cite {FrancfortMR1172450} was that convergence of
energies can only be expected for initial data that are adapted to the
periodic medium, see also \cite{FrancfortMurat1992}.  Diffraction and
dispersion effects are analyzed in the spirit of homogenization theory
in \cite{MR2060593, MR2533955}. While the underlying questions are
similar, these contributions study a different scaling behavior in
$\eps$.  Other homogenization results for the wave equation are
contained in \cite{ZuazuaMR1760033, FrancuKrejciMR1727713, Lebeau,
  OriveZuazua, MR2511805, SchweizerVeneroni-Periodic}.

The study of dispersive effects and the derivation of a dispersive
effective wave equation are central aims in the works of Chen, Fish,
and Nagai, e.g. \cite{ChenFishMR2097759, ChenFish-Uniformly,
  ChenFishMR1896977, ChenFishMR1896976}. The authors expand several
ideas to treat the problem, among others they propose to introduce a
slow and a fast time scale to capture the long-time behavior of waves.
The authors concentrate on numerical studies and do not provide a
derivation of an effective model.

\paragraph{Derivation of dispersive models.}
To our knowledge, the first rigorous result that establishes a
dispersive model for the wave equation in the scaling of \eqref
{eq:eps-wave} appeared in \cite {Lamacz-Disp}. In that contribution,
the one-dimensional case $n=1$ was analyzed, the one-dimensional
version of \eqref {eq:weakly-dispersive} was formulated (in this case,
$A$, $E$, and $F$ are scalar coefficients and the differential
operator is $D = \del_x$), and a result similar to our Theorem \ref
{thm:main} was shown: the well-posedness of the dispersive equation
and an error bound on large time intervals.

Beyond the one dimensional case, we are not aware of any rigorous
results. The most relevant contribution with the perspective taken
here is \cite {SanSym}. In that paper, Bloch-wave expansions are used
to analyze the problem, mathematical insight is gained, and the
dispersive wave equation \eqref {eq:formal-weakly-disp} is formulated
(not in one of the theorems, but as a formal consequence on page
992). We use many of the ideas of that contribution.

Equation \eqref {eq:formal-weakly-disp} appears also as equation (42)
in \cite{ChenFishMR1896976}, the authors call it the ``bad''
Boussinesq equation.  The problem about this equation is its
ill-posedness: Loosely speaking, the equation is of the form $\del_t^2
u + L u = 0$, with $L = -\Delta - \eps^2\Delta^2$. The lowest order part
(in $\eps$) of $L$ is $-\Delta$, hence a positive operator, but for
every $\eps>0$, the operator is negative, since $\Delta^2$ is positive
and contains the highest order of differentiation. One can speculate
that this was the reason why no effective dispersive models were
rigorously formulated in the above mentioned works.

It was already observed in \cite{ChenFishMR1896976}, that a ``good''
Boussinesq equation can be obtained with a simple trick: Going back to
the prototype problem $\del_t^2 u = -L u = \Delta u + \eps^2\Delta^2
u$, we replace $\Delta u$ to lowest order (in $\eps$) by $\del_t^2 u$
and write the equation as $\del_t^2 u = \Delta u +
\eps^2\Delta\del_t^2 u$. In this form, the equation is
well-posed. This observation was also exploited in \cite
{Lamacz-Disp}, where it was shown rigorously that the ``good''
Boussinesq equation is the effective model for large times in the
one-dimensional case.

In this contribution we treat the higher dimensional case, using
methods that are completely different from those of \cite
{Lamacz-Disp}. Our new results rely on a Bloch-wave expansion of the
solution $u^\eps$, which we analyze in Sections \ref
{ssec.Blochwaveexpansion}--\ref {ssec.ExpansionDispersion}; in this
part we follow closely the ideas of \cite {SanSym}.  To clearify the
connection to this well-known article, we repeat that no convergence
result appears in \cite {SanSym}, function spaces and assumptions are
not always clearly specified in \cite {SanSym}, and only the ``bad''
Boussinesq equation appears (with a wrong sign and without further
discussion) in \cite {SanSym}.

We have to introduce two assumptions: (i) inital data are compactly
supported in Fourier space and (ii) the heterogeneous medium has
certain symmetries in the cell $Y$.  Both assumptions can possibly be
relaxed with some additional effort and new decomposition techniques;
our aim here is to present the long-time homogenization result in the
simplest relevant case. Due to the multi-dimensional setting, we have
anyway to work with tensors of coefficients to transform the ``bad''
effective equation into the ``good'' one. We show with mathematical
rigor that the weakly dispersive effective equation has the
approximation property for large times.

\smallskip In Section \ref{sec.Bloch} we expand the solution $u^\eps$
in Bloch waves, in Section \ref{sec.wellposed} we analyze the weakly
dispersive equation \eqref {eq:weakly-dispersive}.  The proof of
Theorem \ref {thm:main} is concluded at the end of Section
\ref{sec.wellposed}. Section \ref {sec.numerics} contains numerical
results.

\section{Approximation with a Bloch wave expansion}
\label{sec.Bloch}

In this section we present, in slightly changed notation and with
mathematical rigor regarding assumptions and norms, the approximation
results of \cite {SanSym}. To simplify some of the notation of \cite
{SanSym}, we consider here only the mass-density $\bar\rho \equiv 1$
and the scaling factor $\lambda = 1$.  

\subsection{Bloch wave expansion}
\label{ssec.Blochwaveexpansion}

We are given a periodic medium by the coefficient matrix $a_Y(y)$ on
the cube $Y$. The Bloch wave expansion uses functions $\psi_m$, which
are solutions of a periodic eigenvalue problem on $Y$.  The wave
parameter $k$ is a vector in the reciprocal periodicity cell $Z =
(-1/2, 1/2)^n$.  At this point, we regard $k\in Z$ as a given
parameter and consider
\begin{equation}
  \label{eq:eigenvalue}
  - (\nabla_y + \ri k ) \cdot (a_Y(y) (\nabla_y + \ri k ) \psi_m(y,k) ) 
  = \mu_m(k) \psi_m(y,k)\,.
\end{equation}
We search for $\psi_m(.,k) : Y \to \C$ in the space $H^1_\per(Y)$,
defined as the space of periodic functions on $Y$ of class $H^1$.  We
find a family (indexed by $m\in \N=\{0,1,2,\dots\}$) of periodic
solutions $\psi_m(.,k) : Y \to \C$ with non-negative real eigenvalues
$\mu_m(k)$, $\mu_{m+1}(k) \ge \mu_m(k)$, both the solution and the
eigenvalue depend on $k$. We assume that the functions are normalized
in $L^2(Y)$, $\| \psi_m \|_{L^2(Y)} = 1$.  Regarding the regularity of
$\psi_m$ we note that, for $a_Y$ of class $C^1$, standard elliptic
regularity theory implies $\psi_m\in H^2(Y)$.

Based on the eigenfunction $\psi_m$, we can construct the
quasi-periodic Bloch-waves $w_m(y,k) := \psi_m(y,k) e^{\ri k\cdot y}$,
which satisfy
\begin{equation}
  \label{eq:wm-problem}
  -\nabla_y\cdot (a_Y(y) \nabla_y w_m(y,k)) = \mu_m(k) w_m(y,k).
\end{equation}

We recall an essential fact regarding the completeness of these
eigenfunctions (see e.g.  \cite{MR1484944} for this well-known
result). The Bloch waves form a basis of $L^2(\R^n)$ in the sense that
every function $g\in L^2(\R^n)$ can be expanded as
\begin{equation}
  \label{eq:Bloch-complete}
  g(y) = \sum_{m=0}^\infty \int_Z \hat g_m(k) w_m(y,k)\, dk\,,\qquad
  \hat g_m(k) = \int_{\R^n} g(y) w_m(y,k)^*\, dy\,,
\end{equation}
where we use the star $^*$ to denote complex conjugation and the first
equality is understood in the sense of $L^2(\R^n)$-convergence of
partial sums.  There holds the Parseval identity
\begin{equation}
  \label{eq:Parseval}
  \| g \|_{L^2(\R^n)}^2 = \int_{\R^n} |g(x)|^2\, dx
  = \sum_{m=0}^\infty \int_Z |\hat g_m(k)|^2\, dk
  = \| \hat g \|_{l^2(\N, L^2(Z))}^2\,.
\end{equation}

\subsubsection*{Rescaled Bloch wave expansion}

We investigate a strongly heterogeneous medium $a^\eps(x) =
a_Y(x/\eps)$. Starting from the Bloch waves on the cube $Y$, we define
rescaled quantities as
\begin{align}
  &\psi_m^\eps(x,k) := \psi_m\left( \frac{x}{\eps} , \eps k \right)\,,\quad
  \mu_m^\eps(k)   := \frac1{\eps^2} \mu_m(\eps k)\,,\\
  &w_m^\eps(x,k) := w_m\left( \frac{x}{\eps} , \eps k \right) 
  = \psi_m^\eps(x,k) e^{\ri k\cdot x}
  = \psi_m\left( \frac{x}{\eps} , \eps k \right) e^{\ri k\cdot x}\,.
\end{align}
This choice guarantees, in particular,
\begin{equation}
  \label{eq:wm-eps-problem}
  -\nabla\cdot (a^\eps(x) \nabla w_m^\eps(x,k)) = \mu_m^\eps(k) w_m^\eps(x,k).
\end{equation}

The expansion formula \eqref {eq:Bloch-complete} in Bloch
eigenfunctions can be expressed in the new variables. Every function
$f\in L^2(\R^n)$ can be written as
\begin{equation}
  \label{eq:Bloch-complete-eps}
  f(x) = \sum_{m=0}^\infty \int_{Z/\eps} \hat f_m^\eps(k) w_m^\eps(x,k)\, dk\,,\qquad
  \hat f_m^\eps(k) = \int_{\R^n} f(x) w_m^\eps(x,k)^*\, dx\,.
\end{equation}
To verify this formula, it suffices to set $f(x) = g(x/\eps)$ and
$\hat f_m^\eps(k) = \eps^n \hat g_m(\eps k)$ and to use \eqref
{eq:Bloch-complete}. This shows additionally the Parseval identity in
transformed variables,
\begin{equation}
  \label{eq:Parseval-eps}
  \| f \|_{L^2(\R^n)}^2 = \int_{\R^n} |f(x)|^2\, dx
  = \sum_{m=0}^\infty \int_{Z/\eps} |\hat f_m^\eps(k)|^2\, dk
  = \| \hat f^\eps \|_{l^2(\N, L^2(Z/\eps))}^2\,.
\end{equation}

In our situation of $a_Y\in C^1(\R^n,\R^{n\times n})$ and $f\in
H^2(\R^n)$, the series in \eqref {eq:Bloch-complete-eps} is also
convergent in $H^1(\R^n)$. We provide a proof in Appendix \ref
{app.H1-convergence}.

\subsubsection*{Expansion of the solution}
The Bloch-wave formalism can provide a formula for the solution of the
original wave equation.

\begin{lemma}[Expansion of the solution]\label{lem:sol-expansion}
  Let the medium $a_Y:\R^n\to \R^{n\times n}$ and the initial data $f:
  \R^n\to \R$ satisfy Assumption \ref {ass:a-f}.  Then, for every
  $\eps>0$ and every $T_\eps\in (0,\infty)$, the wave equation \eqref
  {eq:eps-wave} has a unique weak solution $u^\eps$ with the
  regularity $u^\eps(x,t)\in L^\infty(0,T_\eps; H^2(\R^n)) \cap
  W^{1,\infty}(0,T_\eps; H^1(\R^n)) \cap W^{2,\infty}(0,T_\eps;
  L^2(\R^n))$.

  The solution $u^\eps$ of \eqref {eq:eps-wave} can be represented as
  \begin{equation}
    \label{eq:Bloch-complete-soln-eps}
    u^\eps(x,t) 
    = \sum_{m=0}^\infty \int_{Z/\eps} \hat f_m^\eps(k)\, w_m^\eps(x,k)
    \, \Re\left(e^{\ri t \sqrt{\mu_m^\eps(k)}}\right)\, dk\,.
  \end{equation}
  Here, the right hand side is understood as the strong
  $L^2(\R^n)$-limit of partial sums, for every fixed $t\ge 0$, and
  $\Re(.)$ denotes the real part.
\end{lemma}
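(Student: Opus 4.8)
The plan is to establish the two claims—well-posedness with the stated regularity, and the Bloch-wave representation formula—essentially by separating variables in the Bloch frequency $(m,k)$ and solving the resulting family of scalar ODEs. First I would recall that the operator $u \mapsto -\nabla\cdot(a^\eps\nabla u)$ is self-adjoint and non-negative on $L^2(\R^n)$ with form domain $H^1(\R^n)$, so by standard semigroup/spectral theory the wave equation \eqref{eq:eps-wave} with initial data $f\in H^2(\R^n)$, $\del_t u^\eps(\cdot,0)=0$ has a unique weak solution in the energy class. The regularity $u^\eps\in L^\infty(0,T_\eps;H^2)\cap W^{1,\infty}(0,T_\eps;H^1)\cap W^{2,\infty}(0,T_\eps;L^2)$ follows because $f\in H^2(\R^n)$ (recall from Assumption \ref{ass:a-f} and the remark after it that $F_0$ compactly supported forces $f\in C^\infty\cap H^s$ for all $s$) and because the operator maps $H^2$ into $L^2$ with the elliptic estimate, so one may differentiate the equation once in time and run the energy estimate for $\del_t u^\eps$; here the $C^1$-regularity of $a_Y$ (hence $a^\eps$) is what is needed for the $H^2$-bound on the solution via interior elliptic regularity, uniformly in $t$.

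Next I would derive the representation \eqref{eq:Bloch-complete-soln-eps}. The key point is that the rescaled Bloch waves diagonalize the operator: by \eqref{eq:wm-eps-problem}, $w_m^\eps(\cdot,k)$ is a (generalized) eigenfunction with eigenvalue $\mu_m^\eps(k)$. Applying the Bloch transform $f\mapsto(\hat f_m^\eps)_{m,k}$ from \eqref{eq:Bloch-complete-eps}, which is an isometry onto $l^2(\N,L^2(Z/\eps))$ by \eqref{eq:Parseval-eps}, the PDE \eqref{eq:eps-wave} becomes, for each fixed $(m,k)$, the scalar ODE $\del_t^2 c_m^\eps(k,t) = -\mu_m^\eps(k)\, c_m^\eps(k,t)$ with $c_m^\eps(k,0)=\hat f_m^\eps(k)$ and $\del_t c_m^\eps(k,0)=0$, whose solution is $c_m^\eps(k,t)=\hat f_m^\eps(k)\cos(t\sqrt{\mu_m^\eps(k)}) = \hat f_m^\eps(k)\,\Re(e^{\ri t\sqrt{\mu_m^\eps(k)}})$. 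Inverting the Bloch transform gives exactly \eqref{eq:Bloch-complete-soln-eps}, with convergence of partial sums in $L^2(\R^n)$ uniformly in $t$ since $|\Re(e^{\ri t\sqrt{\mu_m^\eps(k)}})|\le 1$ and $(\hat f_m^\eps)\in l^2(\N,L^2(Z/\eps))$ by Parseval. To make this rigorous rather than formal, I would verify that the function defined by the right-hand side of \eqref{eq:Bloch-complete-soln-eps} is indeed a weak solution: test against $w_m^\eps(\cdot,k)$-combinations (or smooth compactly supported test functions expanded in Bloch waves), use \eqref{eq:wm-eps-problem} to transfer the spatial operator onto the modes, and check the ODE identity termwise; uniqueness from the first part then identifies it with $u^\eps$.

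The main obstacle I anticipate is the interchange of the infinite sum/integral with the differential operators and with the weak formulation—i.e., justifying that the formally diagonalized solution genuinely solves the PDE in the weak sense and lies in the claimed regularity classes. This requires uniform-in-$t$ control of $\sum_m\int_{Z/\eps}(1+\mu_m^\eps(k)+\mu_m^\eps(k)^2)|\hat f_m^\eps(k)|^2\,dk$, which is finite precisely because $f\in H^2(\R^n)$: one shows that the Bloch symbol $\mu_m^\eps(k)$ is comparable to the Fourier-side growth through the min-max characterization and the coercivity constant $\gamma$, so that the $H^2$-norm of $f$ controls these weighted sums (this is the statement, alluded to in the excerpt and proved in Appendix \ref{app.H1-convergence} at the $H^1$-level, that the Bloch series converges in $H^1$, and analogously in $H^2$ under the $C^1$ assumption on $a_Y$). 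Once these weighted Parseval-type bounds are in hand, differentiating \eqref{eq:Bloch-complete-soln-eps} termwise in $x$ (up to second order) and in $t$ (up to second order) is justified by dominated convergence, the regularity follows, and the weak formulation holds. I would close by noting $n\le 3$ is not essential here—it enters only later through Sobolev embeddings—so the lemma holds in the stated generality with $a_Y\in C^1$.
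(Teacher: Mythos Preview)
Your proposal is correct and follows essentially the same route as the paper: establish well-posedness with energy estimates (the paper uses a Galerkin scheme rather than invoking semigroup theory, but the substance---the first-order energy identity and the higher-order estimate obtained by testing with $\partial_t L^\eps u^\eps$, combined with elliptic regularity from $a_Y\in C^1$---is the same), then diagonalize via the Bloch transform, solve the scalar ODEs, and identify the resulting series with $u^\eps$ by uniqueness. The only minor technical difference is in the identification step: where you propose to justify termwise differentiation via the weighted bound $\sum_m\int (1+\mu_m^\eps+(\mu_m^\eps)^2)|\hat f_m^\eps|^2<\infty$ (which indeed follows from $L^\eps f\in L^2$), the paper instead observes that the partial sums $u^\eps_M$ are exact solutions with truncated data $f_M$, uses energy conservation to show they form a Cauchy sequence in $L^\infty(0,T_\eps;H^1(\R^n))$ (needing only the $H^1$-convergence of the Bloch series for $f$, supplied by Appendix~\ref{app.H1-convergence}), and then appeals to uniqueness---so the paper gets away with one derivative less on the Bloch side, but both arguments are valid.
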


Before we start the proof, we note that the expression in \eqref
{eq:Bloch-complete-soln-eps} formally defines a solution of \eqref
{eq:eps-wave}--\eqref {eq:f-cond}. In fact, the second time derivative
of the right hand side is given by the same formula, introducing only
the additional factor $-\mu_m^\eps(k)$ under the integral. On the
other hand, the application of the operator $\nabla\cdot (a^\eps(x)
\nabla)$ to the integrand produces, by \eqref {eq:wm-eps-problem}, the
same result.

\begin{proof}
  {\em Step 1. The weak solution.} A weak solution $u^\eps$ can be
  constructed, e.g., with a Galerkin scheme. One exploits the energy
  estimate which is obtained with a multiplication of equation \eqref
  {eq:eps-wave} by the real function $\del_t u^\eps$,
  \begin{align*}
    0 = \int_{\R^n} [\del_t^2 u^\eps(.,t) - \nabla\cdot (a^\eps(.) \nabla
    u^\eps(.,t))] \del_t u^\eps
    = \frac{d}{dt} \frac12 \int_{\R^n} |\del_t u^\eps(t) |^2 
    +  |\nabla  u^\eps(t)|_{a^\eps}^2 \,,
  \end{align*}
  where the last equality holds, since $a^\eps(x)$ is a symmetric
  matrix for every $x\in\R^n$.  Here and below we use the notation
  $|\xi|^2_a := \xi^*\cdot (a\cdot \xi)$ for vectors $\xi\in\C^n$ and
  matrices $a\in\R^{n\times n}$.  Also higher order estimates can be
  obtained. We use $L^\eps := \nabla\cdot (a^\eps(x) \nabla)$ and
  multiply the equation $\del_t^2 u^\eps = L^\eps u^\eps$ by $-\del_t
  (L^\eps u^\eps)$ to find
  \begin{equation}\label{eq:higher-order}
    \frac{d}{dt} \frac12 \int |\del_t\nabla u^\eps|_{a^\eps}^2 
    +  |L^\eps u^\eps|^2  = 0\,.
  \end{equation}
  Since the initial data are $u|_{t=0} = f \in H^2(\R^n)$ and $\del_t
  u|_{t=0} = 0$, we obtain estimates for $u^\eps$ in the function
  spaces that are stated in the Theorem. The estimates for $L^\eps
  u^\eps(.,t) = \del_t^2 u^\eps(.,t) \in L^2(\R^n)$ imply the
  regularity $u^\eps \in W^{2,\infty}(0,T_\eps; L^2(\R^n))$ and the
  estimates for $u^\eps(.,t)\in H^2(\R^n)$ due to $a_Y\in
  C^1(Y,\R^{n\times n})$ by standard elliptic regularity
  theory. Uniqueness within the given class follows from linearity,
  repeating the above calculations for differences of solutions.

  \smallskip {\em Step 2. Convergence in \eqref
    {eq:Bloch-complete-soln-eps}.}  The Parseval identity \eqref
  {eq:Parseval-eps} implies that the coefficient functions define an
  element $(\hat f^\eps_m(k))_{m,k}$ of $l^2(\N, L^2(Z/\eps))$. As a
  consequence, also the modified coefficients $\left(\hat f^\eps_m(k)
    \Re\left( e^{\ri t \sqrt{\mu_m^\eps(k)}}\right)\right)_{m,k}$
  define an element in the same space, since all factors have absolute
  value bounded by $1$. Using again the Parseval identity \eqref
  {eq:Parseval-eps}, we conclude that the sum of \eqref
  {eq:Bloch-complete-soln-eps} converges in $L^2(\R^n)$, independently
  of $t\ge 0$.

  \smallskip {\em Step 3. Identification of $u^\eps$.}  We consider a
  partial sum $\sum_{m=1}^M$ in \eqref {eq:Bloch-complete-soln-eps} to
  define a function $u^\eps_M$ and observe that this provides a strong
  solution $u^\eps_M$ of the wave equation to the initial values $f_M
  = \sum_{m=0}^M \int_{Z/\eps} \hat f_m^\eps(k) w_m^\eps(x,k)\, dk$
  and vanishing initial velocity. This fact can be checked with a
  direct calculation: the operator $\nabla\cdot (a^\eps(x) \nabla)$ is
  understood in the weak form and can be applied to the
  $H^1(Y)$-functions $w_m^\eps$. We claim that $u^\eps_M$ forms a
  Cauchy sequence in the space $L^\infty([0,T_\eps], H^1(\R^n))$. This
  follows with a testing argument, exploiting
  \begin{align*}
   \int_{\R^n}| \nabla u^\eps_M(t) - \nabla u^\eps_N(t) |_{a^\eps}^2
   + | \del_t u^\eps_M(t) - \del_t u^\eps_N(t) |^2
   =    \int_{\R^n}| \nabla f^\eps_M - \nabla f^\eps_N |_{a^\eps}^2
   \to 0
  \end{align*}
  for $M, N \to \infty$ due to the $H^1(\R^n)$-convergence in \eqref
  {eq:Bloch-complete-eps}. We conclude that $u^\eps_M$ converges to a
  limit function. The limit function is again a weak solution of the
  wave equation, from the uniqueness of weak solutions we conclude
  $u^\eps_M \to u^\eps$ for $M\to \infty$.

  On the other hand, as observed in Step 2, by definition of
  $u^\eps_M$, the limit function is given by the right hand side of
  \eqref {eq:Bloch-complete-soln-eps}.
\end{proof}

\subsection{The approximation results of Santosa and Symes}
\label{ssec.SanSym}

With the next two theorems we observe that, for small $\eps>0$, the
expression of \eqref {eq:Bloch-complete-soln-eps} may be
simplified. In our first simplification we realize that all indices
$m$ with $m\ge 1$ can be neglected. This observation is a fundamental
tool in the Bloch-wave homogenization method and is also used, e.g.,
in \cite {MR1642454, MR1897707, MR1484944}.

\begin{theorem}[Santosa and Symes \cite {SanSym}, Theorem
  1]\label{thm:SanSym-1}
  Let the medium $a_Y:\R^n\to \R^{n\times n}$ and the initial data $f:
  \R^n\to \R$ satisfy Assumption \ref {ass:a-f}.  Let $u^\eps :
  [0,\infty) \to H^2(\R^n)$ be given by \eqref
  {eq:Bloch-complete-soln-eps}.  Then there exists $C = C(f) > 0$ such
  that
  \begin{equation}
    \label{eq:thm1-SanSym}
    \sup_{t\in (0,\infty)} \left\| \sum_{m=1}^\infty 
      \int_{Z/\eps} \hat f_m^\eps(k)\, w_m^\eps(x,k)\, \Re
      \left(e^{\ri t \sqrt{\mu_m^\eps(k)}}\right)\, dk 
    \right\|_{L^2(\R^n)} \le C \eps\,.
  \end{equation}
\end{theorem}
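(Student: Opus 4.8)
The plan is to estimate the $L^2$-norm in \eqref{eq:thm1-SanSym} via the Parseval identity \eqref{eq:Parseval-eps} and then show that the Fourier-Bloch coefficients $\hat f_m^\eps(k)$ for $m\ge 1$ are already of order $\eps$ in the relevant $l^2(\N,L^2(Z/\eps))$-norm. Concretely, since all factors $\Re(e^{\ri t\sqrt{\mu_m^\eps(k)}})$ and $w_m^\eps(x,k)$ respect the orthonormality underlying \eqref{eq:Parseval-eps}, the square of the left-hand side of \eqref{eq:thm1-SanSym} is bounded (uniformly in $t$) by $\sum_{m=1}^\infty \int_{Z/\eps} |\hat f_m^\eps(k)|^2\,dk$. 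So the theorem reduces to the $\eps$-independent-looking claim
\begin{equation*}
  \sum_{m=1}^\infty \int_{Z/\eps} |\hat f_m^\eps(k)|^2\, dk \le C(f)\,\eps^2\,.
\end{equation*}
Undoing the rescaling $\hat f_m^\eps(k) = \eps^n \hat g_m(\eps k)$ with $g(y) = f(y/\eps)$ — no, more directly: I would go back to the definition $\hat f_m^\eps(k) = \int_{\R^n} f(x)\, w_m^\eps(x,k)^*\,dx$ and insert the Fourier representation \eqref{eq:f-cond} of $f$, whose transform $F_0$ is supported in the fixed compact set $K$.

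The key mechanism is that for $\eps$ small, the support constraint forces $k$ to range only over $K$ (after accounting for the $e^{\ri k\cdot x}$ oscillation in $w_m^\eps = \psi_m^\eps e^{\ri k\cdot x}$), and on that bounded region the cell $Y$-average of $\psi_m(\cdot,\eps k)$ against the ground-state bundle is what controls the coefficient. The decisive orthogonality fact is that $\psi_0(\cdot,0)$ is constant (the ground state at $k=0$ of a divergence-form operator with Neumann-periodic boundary conditions), so $\int_Y \psi_m(y,0)\,dy = 0$ for all $m\ge 1$ by $L^2(Y)$-orthogonality of the eigenfunctions. Hence, writing $\hat f_m^\eps(k)$ as (essentially) $F_0$ convolved against a quantity that, for each fixed $k$ in the bounded range, equals $\eps^{n/2}$ times a cell integral of $f$ oscillating on scale $\eps$ against $\overline{\psi_m(x/\eps,\eps k)}$, a Taylor expansion of $\psi_m(\cdot,\eps k)$ around $\eps k = 0$ shows the leading term vanishes and the next term carries a factor $\eps k$, i.e. an additional power of $\eps$. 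Quantitatively one uses that $k\mapsto \psi_m(\cdot,k)$ is continuously differentiable near $0$ (with bounds uniform in $m$ coming from $a_Y\in C^1$ and elliptic regularity, summable because of the Bessel/Parseval structure), so the cell integral is $O(\eps |k|) = O(\eps)$ on $K$. Squaring, integrating $dk$ over the bounded effective support, and summing in $m$ using the completeness relation \eqref{eq:Bloch-complete} applied to $f$ (so that $\sum_m \int |\hat f_m^\eps|^2 \le \|f\|_{L^2}^2$ with the excess being $O(\eps^2)$) yields the claimed bound with $C(f)$ depending on $\|F_0\|_{L^\infty}$, $|K|$, and the $C^1$-norm of $a_Y$.

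An alternative, cleaner route — and the one I would actually follow — avoids hand-expanding $\psi_m$: test the Bloch representation of $f$ against the subspace spanned by $\{w_m^\eps(\cdot,k): m\ge1\}$ and observe that this subspace is exactly the orthogonal complement, within each $k$-fiber, of $w_0^\eps(\cdot,k)$, hence
\begin{equation*}
  \sum_{m=1}^\infty |\hat f_m^\eps(k)|^2
  = \|P_k^\eps f\|^2 \quad\text{where } P_k^\eps \text{ projects onto } (w_0^\eps(\cdot,k))^\perp\text{ in the }k\text{-fiber},
\end{equation*}
and then use that on Fourier frequencies $\xi$ with $|\xi|$ bounded (which is all that $\operatorname{supp}F_0\subset K$ allows, once $\eps|\xi|\in Z$), the rescaled ground Bloch wave $w_0^\eps(\cdot,\xi)$ approximates $e^{\ri \xi\cdot x}$ with an error $O(\eps)$ in $H^1$-type norms; feeding a pure exponential $e^{\ri\xi\cdot x}$ into the fiber almost annihilates $P_k^\eps$, leaving the $O(\eps)$ defect. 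This turns the whole estimate into: (1) Parseval to pass to coefficients; (2) recognize the $m\ge1$ part as a fiberwise projection onto the non-ground modes; (3) an $\eps$-expansion of the ground Bloch mode near zero quasimomentum, uniform over the compact frequency set $K$, giving the $O(\eps)$; (4) integrate over the bounded $k$-region and collect constants into $C(f)$.

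The main obstacle is step (3)/(the uniformity): one must make the perturbative statement ``$w_0^\eps(\cdot,k)$ is $e^{\ri k\cdot x}$ up to $O(\eps)$, uniformly for $\eps k$ in a neighborhood of $0$'' quantitatively precise in the right norm, which requires analyticity (or at least $C^1$-regularity) of the simple ground Bloch eigenpair $(\mu_0(k),\psi_0(\cdot,k))$ near $k=0$ together with $\psi_0(\cdot,0)\equiv |Y|^{-1/2}$; the simplicity of $\mu_0$ at $k=0$ and the spectral gap above it are what make this regularity available, and the $C^1$ hypothesis on $a_Y$ (plus $n\le 3$ for the Sobolev embeddings invoked when converting $H^1$-bounds on $\psi_m$ into pointwise/$L^\infty$ control) is exactly what is needed to push the constant through uniformly. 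Everything else — Parseval, the orthogonal-complement identification, the reduction to a bounded frequency region — is bookkeeping once the support assumption on $F_0$ is in force.
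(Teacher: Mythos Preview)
Your reduction via Parseval to the time-independent bound $\sum_{m\ge 1}\int_{Z/\eps}|\hat f_m^\eps(k)|^2\,dk \le C\eps^2$ is valid, and your second (``cleaner'') route can be made to work: once one checks that for $\eps$ small (so $K\subset Z/\eps$) the aliasing terms vanish and one has the exact identity $\hat f_m^\eps(k) = (2\pi)^{n/2}|Y|^{-1}\,\overline{\langle 1,\psi_m(\cdot,\eps k)\rangle}_{L^2(Y)}\,F_0(k)$, the fiberwise Parseval $\sum_{m\ge1}|\langle 1,\psi_m(\cdot,\eps k)\rangle|^2 = \|1\|_{L^2(Y)}^2 - |\langle 1,\psi_0(\cdot,\eps k)\rangle|^2 = O(\eps^2|k|^2)$ follows from the $L^2(Y)$-regularity of $\psi_0(\cdot,\kappa)$ at $\kappa=0$, and integrating over the compact set $K$ closes the argument. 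However, your \emph{first} route---Taylor-expanding each $\psi_m(\cdot,\eps k)$ in $\eps k$ with bounds uniform in $m$---has a genuine gap: for $m\ge 1$ the eigenvalue branches may cross, so individual eigenfunctions $\psi_m(\cdot,k)$ need not even be continuous in $k$, let alone differentiable; only the ground branch $\psi_0$ is analytic near $0$ thanks to simplicity and the spectral gap. The uniformity in $m$ you invoke does not follow from $a_Y\in C^1$ and elliptic regularity.

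The paper's proof is entirely different and arguably more robust. Instead of analyzing $\hat f_m^\eps$ statically, it rewrites the time-dependent Bloch coefficient via the eigenvalue equation as $-\eps^2\mu_m(\eps k)^{-1}\int_{\R^n}\partial_t^2 u^\eps\, (w_m^{\eps})^*$, then feeds in the energy bound $\|\partial_t^2 u^\eps(\cdot,t)\|_{L^2}=\|L^\eps u^\eps(\cdot,t)\|_{L^2}\le C\eps^{-1}$ (from $\|L^\eps f\|_{L^2}\le C\eps^{-1}$ for $f\in H^2$), applies Parseval, and divides by the uniform lower bound $\mu_m\ge c_0>0$ for $m\ge 1$. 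No eigenfunction perturbation theory is used, and the compact Fourier support of $f$ plays no role---only $f\in H^2(\R^n)$ is needed. Your approach, by contrast, relies essentially on $\operatorname{supp}F_0\subset K$ to kill aliasing and reduce to a bounded $k$-region; its compensating advantage is that it is purely static and yields the slightly stronger information that the $m\ge1$ part of the initial datum $f$ itself is already $O(\eps)$ in $L^2(\R^n)$.
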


\begin{proof}
  We consider a single coefficient $\hat f_m^\eps(k) \Re\,
  \left(e^{\ri t \sqrt{\mu_m^\eps(k)}}\right)$ in the expansion of
  $u^\eps$ in \eqref {eq:Bloch-complete-soln-eps}. We use first the
  inversion formula \eqref {eq:Bloch-complete-eps} to evaluate this
  coefficient, then the eigenvalue property \eqref {eq:wm-eps-problem}
  to introduce the factor $\mu_m^\eps(k) = \eps^{-2} \mu_m(\eps k)$,
  then integration by parts and the solution property of $u^\eps$,
  \begin{align}
    \hat f_m^\eps(k)\, \Re\left(e^{\ri t  \sqrt{\mu_m^\eps(k)}}\right)
    &= \int_{\R^n} u^\eps(x,t) w_m^\eps(x,k)^*\, dx \notag\\
    & = -\frac{1}{\mu_m^\eps(k)} \int_{\R^n} u^\eps(x,t) 
    [\nabla\cdot (a^\eps(x) \nabla w_m^\eps(x,k))]^* \, dx \notag\\
    & = -\frac{\eps^2}{\mu_m(\eps k)} \int_{\R^n} [\del_t^2 u^\eps(x,t)]
    w_m^\eps(x,k)^* \, dx\,.\label{eq:dt_sq_bloch}
  \end{align}

  We claim that, with $C>0$ independent of $t\in [0,\infty)$, the
  functions $x\mapsto \del_t^2 u^\eps(x,t)$ satisfy the estimate $\|
  \del_t^2 u^\eps(.,t) \|_{L^2(\R^n)} \le C \eps^{-1}$.  Indeed, this
  bound can be obtained as in \eqref {eq:higher-order}, where
  multiplication of $\del_t^2 u^\eps = L^\eps u^\eps$ with $\del_t
  L^\eps u^\eps$ provided
  \begin{align*}
    \int_{\R^n} |\del_t\nabla u^\eps(.,t)|^2_{a^\eps} + |L^\eps
    u^\eps(.,t)|^2 = \int_{\R^n} |L^\eps u^\eps(.,0)|^2 \,.
  \end{align*}
  Since the initial data $f$ are smooth, we have $\| L^\eps
  u^\eps|_{t=0} \|_{L^2(\R^n)} = \| \nabla\cdot (a^\eps(x) \nabla
  f)\|_{L^2(\R^n)} \le C \eps^{-1}$, hence $\| L^\eps
  u^\eps(.,t)\|_{L^2(\R^n)} \leq C\eps^{-1}$.  Accordingly, by the
  evolution equation, we also have $\| \del_t^2 u^\eps(.,t)
  \|_{L^2(\R^n)} = \| L^\eps u^\eps(.,t)\|_{L^2(\R^n)} \le C
  \eps^{-1}$.

  We can now continue \eqref{eq:dt_sq_bloch}. From the Parseval
  identity \eqref {eq:Parseval-eps} we obtain
  \begin{align*}
    \left\| \mu_m(\eps k) \hat f_m^\eps(k)\, \Re \left(e^{\ri t
        \sqrt{\mu_m^\eps(k)}}\right) \right\|_{l^2(\N, L^2(Z/\eps))} = \eps^2
    \| \del_t^2 u^\eps(.,t) \|_{L^2(\R^n)} \le C \eps\,.
  \end{align*}
  It remains to observe that omitting the term $m=0$ decreases the
  norm on the left hand side of this relation. Regarding terms with
  $m\ge 1$, we exploit that there exists a lower bound $c_0 > 0$ such
  that eigenvalues are bounded from below, $\mu_m(\xi) \ge c_0$,
  independent of $\xi\in Z$ and $m\ge 1$,
  cf. \cite{MR1484944}. Another application of the Parseval identity
  provides the claim \eqref {eq:thm1-SanSym}.
\end{proof}

At this point, we have obtained a first approximation of the solution
$u^\eps$. In the expansion of $u^\eps$, all contributions from indices
$m\ge 1$ are not relevant at the lowest order (uniformly in
time). Theorem \ref {thm:SanSym-1} provides $\| u^\eps - u_0^\eps
\|_{L^\infty((0,\infty), L^2(\R^n))} \le C \eps$, where
\begin{equation}
  \label{eq:Bloch-complete-soln-eps-m0}
  u_0^\eps(x,t) :=  \int_{Z/\eps} \hat f_0^\eps(k) w_0^\eps(x,k)
  \, \Re\left( e^{\ri t \sqrt{\mu_0^\eps(k)}}\right)\, dk\,.
\end{equation}

We will now analyze $u_0^\eps$ further.  The next aim is to replace
the Bloch coefficient $\hat f_0^\eps(k)$ by the Fourier coefficient
$F_0(k)$. At this point, we make more substantial changes with respect
to \cite{SanSym}, where (without providing norms), the essence of the
subsequent results is observed in Theorem 2.

We start with a general observation regarding Fourier-transforms.
\begin{lemma}[Products with periodic functions]
  \label{lem:periodic-functions-product}
  Let $h\in L^2(\R^n,\C)\cap L^1(\R^n,\C)$ be a function in space
  dimension $n\le 3$. For fixed $\eps>0$, let $Y_\eps := (-\eps\pi,
  \eps\pi)^n$ be a periodicity cell, let $\Phi:\R^n\to \C$ be a
  $Y_\eps$-periodic function with $\Phi \in H^2_\per(Y_\eps,\C)$.

  If the Fourier transform of $h$ vanishes in grid points $\Z^n/\eps$,
  then its $L^2(\R^n)$-product with $\Phi$ vanishes. More precisely,
  there holds
  \begin{equation}
    \label{eq:product-lemma}
    \int_{\R^n} h(x) e^{\ri l\cdot x/\eps}\, dx = 0 
    \quad\forall\, l\in \Z^n    \qquad\Rightarrow\qquad 
    \int_{\R^n} h(x) \Phi(x)\, dx = 0.
  \end{equation}
\end{lemma}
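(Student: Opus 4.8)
The natural approach is to expand $\Phi$ in its Fourier series on the periodicity cell $Y_\eps=(-\eps\pi,\eps\pi)^n$ and reduce the claimed vanishing of $\int_{\R^n} h\,\Phi$ to the hypothesis that $\widehat h$ vanishes on the grid $\Z^n/\eps$. Since $\Phi$ is $Y_\eps$-periodic and of class $H^2_\per(Y_\eps)$, it admits an expansion
\begin{equation*}
  \Phi(x) = \sum_{l\in\Z^n} c_l\, e^{\ri l\cdot x/\eps},
  \qquad c_l = \frac{1}{|Y_\eps|}\int_{Y_\eps} \Phi(x)\, e^{-\ri l\cdot x/\eps}\, dx,
\end{equation*}
where the $H^2_\per(Y_\eps)$-regularity gives the decay $\sum_{l}(1+|l|^2)^2 |c_l|^2 < \infty$, hence in particular $\sum_l |c_l| < \infty$ (here $n\le 3$ is used, so that $(1+|l|^2)^{-2}$ is summable over $\Z^n$ and Cauchy--Schwarz applies), and the series converges absolutely and uniformly to $\Phi$. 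The plan is then to write, at least formally,
\begin{equation*}
  \int_{\R^n} h(x)\,\Phi(x)\, dx
  = \sum_{l\in\Z^n} c_l \int_{\R^n} h(x)\, e^{\ri l\cdot x/\eps}\, dx,
\end{equation*}
and observe that every integral on the right-hand side is, up to the normalizing constant $(2\pi)^{n/2}$, exactly $\widehat h(-l/\eps)$ (or $\widehat h(l/\eps)$, depending on the Fourier-transform sign convention fixed in \eqref{eq:f-cond}), which vanishes by hypothesis for every $l\in\Z^n$. Thus the whole sum is zero and the conclusion \eqref{eq:product-lemma} follows.

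The one point that requires care — and the main (mild) obstacle — is justifying the interchange of the integral over $\R^n$ with the infinite sum over $l$. Since $h\in L^1(\R^n)$ and $\sum_l |c_l| < \infty$, the family $x\mapsto c_l\, h(x)\, e^{\ri l\cdot x/\eps}$ is dominated by $\big(\sum_l |c_l|\big)\,|h(x)| \in L^1(\R^n)$, so the interchange is legitimate by dominated convergence (applied to the partial sums) or directly by Fubini--Tonelli for the product measure on $\R^n\times\Z^n$. One also needs the pointwise identity $\Phi(x)=\sum_l c_l e^{\ri l\cdot x/\eps}$ to hold for a.e.\ $x$, which is guaranteed by the uniform convergence coming from $\sum_l|c_l|<\infty$; note that $h\in L^2(\R^n)$ ensures the product $h\Phi\in L^1(\R^n)$ since $\Phi\in L^\infty(\R^n)$, so the left-hand side of \eqref{eq:product-lemma} is well defined in the first place. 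I would also remark in passing that the Fourier transform in \eqref{eq:f-cond} is taken in the $L^2$-sense, but for $h\in L^1\cap L^2$ it agrees with the classical integral $\widehat h(\xi)=(2\pi)^{-n/2}\int_{\R^n} h(x)\,e^{-\ri\xi\cdot x}\,dx$, so the evaluation of $\widehat h$ at the specific points $l/\eps$ is meaningful pointwise — that is precisely why the hypothesis $h\in L^1(\R^n)$ is included.

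Once these integrability bookkeeping items are in place the argument is essentially one line, so the proof should be short. The dimension restriction $n\le 3$ enters only through the Sobolev-type summability $H^2_\per(Y_\eps)\hookrightarrow \ell^1$ of Fourier coefficients; in higher dimensions one would instead assume $\Phi\in H^s_\per$ with $s>n/2$, consistent with the remark in the introduction that general dimensions are handled under stronger regularity hypotheses on the periodic data.
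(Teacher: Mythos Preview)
Your proposal is correct and follows essentially the same approach as the paper: expand $\Phi$ in its Fourier series on the periodicity cell, use the $H^2_\per$-regularity together with $n\le 3$ to obtain $\ell^1$-summability of the Fourier coefficients, and then observe that each term of the resulting sum vanishes by hypothesis. Your justification of the sum/integral interchange via dominated convergence (using $h\in L^1$ and $\sum_l|c_l|<\infty$) is in fact slightly more direct than the paper's, which instead truncates to balls $B_R(0)$, uses $L^2(B_R)$-convergence of the Fourier series, and controls the two remainder terms separately.
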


\begin{proof} Without loss of generality, we consider only $\eps=1$
  and use $Y = Y_1$ in this proof. We expand the $L^2(Y)$-function
  $\Phi$ in a strongly $L^2(Y)$-convergent Fourier series
  \begin{align*}
    \Phi(x) = \sum_{l\in \Z^n} \alpha_l\, e^{\ri l\cdot x}\,\quad
    \text{ with }\quad (\alpha_l)_l \in l^2(\Z^n, \C)\,.
  \end{align*}
  Because of the regularity $\Phi\in H^2(Y)$, we have additionally the
  decay property $(|l|^2 \alpha_l )_l \in l^2(\Z^n,\C)$. In
  particular, because of $(|l|^{-2})_l \in l^2(\Z^n,\C)$ for $n\le 3$,
  the sequence of Fourier coefficients satisfies $(\alpha_l)_l\in
  l^1(\Z^n,\C)$.

  Since $h$ is of class $L^1(\R^n)$, we can approximate the integral
  on the right hand side of \eqref {eq:product-lemma} by integrals
  over large balls. For $R>0$, we use the ball $B_R(0)\subset
  \R^n$. Because of the embedding $H^2(Y)\subset L^\infty(Y)$ for
  $n\le 3$, the function $\Phi$ is bounded on $Y$. We can therefore
  write with an error term $\rho_1(R)\in \C$ satisfying $\rho_1(R)\to
  0$ for $R\to \infty$,
  \begin{align*}
    &\int_{\R^n} h(x) \Phi(x)\, dx =
    \int_{B_R(0)} h(x) \Phi(x)\, dx + \rho_1(R)\\
    &\qquad = \lim_{L\to \infty}  \sum_{l\in \Z^n, |l|\le L} \alpha_l
    \int_{B_R(0)} h(x) e^{\ri l\cdot x}\, dx + \rho_1(R)\\
    &\qquad = \lim_{L\to \infty}  \sum_{l\in \Z^n, |l|\le L} \alpha_l
    \int_{\R^n} h(x) e^{\ri l\cdot x}\, dx
    + \rho_2(R) + \rho_1(R)
    = \rho_2(R) + \rho_1(R)\,.
  \end{align*}
  In the second equality, we used $h\in L^2(\R^n)$ and the
  $L^2(B_R(0))$-convergence of the Fourier-series. In the fourth
  equality we exploited the assumption, which provides that each of
  the integrals vanishes. In the third equality, we introduced the
  error term $\rho_2(R)$, which satisfies
  \begin{align*}
    |\rho_2(R)| \le \lim_{L\to \infty}  \sum_{l\in \Z^n, |l|\le L} |\alpha_l| 
    \int_{\R^n\setminus B_R(0)} |h(x)|\, dx \to 0
  \end{align*}
  for $R\to \infty$ because of $h\in L^1(\R^n)$ and $(\alpha_l)_l\in
  l^1(\Z^n)$.  Since $R$ was arbitrary, the claim \eqref
  {eq:product-lemma} is verified.
\end{proof}

After this preparation, we can now prove that the Fourier transform
$F_0$ of $f$ is a good approximation of the Bloch wave coefficients
$\hat f_0^\eps$.
\begin{theorem}\label{thm:SanSym-2}
  Let the medium $a_Y:\R^n\to \R^{n\times n}$ and the initial data $f:
  \R^n\to \R$ satisfy Assumption \ref {ass:a-f}, let the dimension be
  $n\in \{1, 2, 3\}$.  Then, with $C = C(f) >0$, there holds
  \begin{equation}
    \label{eq:thm2-SanSym}
    \left\| \hat f_0^\eps - F_0 \right\|_{L^1(Z/\eps)} \le C\eps\,.
  \end{equation}
  Furthermore, for $0<\eps\le 1$ small enough to have $K\subset
  Z/\eps$, there holds
  \begin{equation}
    \label{eq:thm2-SanSym-support}
    \hat f_0^\eps(k) = 0 \quad \forall k\in (Z/\eps) \setminus K\,.
  \end{equation}
\end{theorem}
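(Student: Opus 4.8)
The plan is to establish the vanishing statement \eqref{eq:thm2-SanSym-support} first, since it is the conceptual core, and then to deduce the $L^1$-bound \eqref{eq:thm2-SanSym} from it. Throughout I fix $\eps\le 1$ small enough that $K\subset Z/\eps=(-\tfrac1{2\eps},\tfrac1{2\eps})^n$ (and, shrinking $\eps$ further if needed, small enough for the perturbation estimate below); for $\eps$ bounded away from $0$ both estimates are trivial after enlarging the constant.

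For \eqref{eq:thm2-SanSym-support} I would start from
\[
  \hat f_0^\eps(k)=\int_{\R^n} f(x)\,w_0^\eps(x,k)^*\,dx
  =\int_{\R^n}\bigl(f(x)\,e^{-\ri k\cdot x}\bigr)\,\psi_0(x/\eps,\eps k)^*\,dx ,
\]
using $w_0^\eps(x,k)=\psi_0(x/\eps,\eps k)\,e^{\ri k\cdot x}$, and apply Lemma \ref{lem:periodic-functions-product} with $h(x):=f(x)e^{-\ri k\cdot x}$ and $\Phi(x):=\psi_0(x/\eps,\eps k)^*$. Here $h\in L^1(\R^n)\cap L^2(\R^n)$ since $|h|=|f|$, and $\Phi$ is $Y_\eps$-periodic and of class $H^2_\per(Y_\eps,\C)$ because $\psi_0(\cdot,\eps k)\in H^2_\per(Y)$ by the elliptic regularity noted above; so the hypotheses are met. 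The lemma yields $\hat f_0^\eps(k)=\int h\,\Phi=0$ as soon as $\int_{\R^n}h(x)e^{\ri l\cdot x/\eps}\,dx=0$ for all $l\in\Z^n$, and by Fourier inversion in \eqref{eq:f-cond} this integral equals $(2\pi)^{n/2}F_0(k-l/\eps)$. For $k\in(Z/\eps)\setminus K$ the required vanishing $F_0(k-l/\eps)=0$ holds for every $l\in\Z^n$: for $l=0$ because $k\notin\mathrm{supp}\,F_0$, and for $l\neq 0$ because a component $l_j\neq 0$ forces $|(k-l/\eps)_j|\ge 1/\eps-1/(2\eps)=1/(2\eps)$, hence $k-l/\eps\notin Z/\eps\supseteq K$. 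This proves \eqref{eq:thm2-SanSym-support}.

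For \eqref{eq:thm2-SanSym}, the previous step shows that $\hat f_0^\eps-F_0$ is supported in the fixed compact set $K$, hence $\|\hat f_0^\eps-F_0\|_{L^1(Z/\eps)}=\|\hat f_0^\eps-F_0\|_{L^1(K)}$. On $K$ I would use $F_0(k)=(2\pi)^{-n/2}\int_{\R^n} f(x)e^{-\ri k\cdot x}\,dx$ together with the fact that $\psi_0(\cdot,0)$ is the constant $(2\pi)^{-n/2}$ — the lowest cell eigenvalue is $\mu_0(0)=0$ with constant eigenfunction, normalized over $|Y|=(2\pi)^n$ — to get
\[
  \hat f_0^\eps(k)-F_0(k)=\int_{\R^n} f(x)\,\bigl[\psi_0(x/\eps,\eps k)^*-(2\pi)^{-n/2}\bigr]\,e^{-\ri k\cdot x}\,dx .
\]
Since $\mu_0(0)=0$ is a simple eigenvalue (there is a spectral gap above it), the lowest Bloch pair $(\mu_0(\kappa),\psi_0(\cdot,\kappa))$ depends smoothly — in particular Lipschitz-continuously — on $\kappa$ near $0$, with values in $H^2(Y)\hookrightarrow L^\infty(Y)$ for $n\le 3$; therefore $\sup_{y\in Y}|\psi_0(y,\kappa)^*-(2\pi)^{-n/2}|\le C|\kappa|$ for small $|\kappa|$. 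Applying this with $\kappa=\eps k$, $k\in K$ (so $|\eps k|$ is small uniformly on the bounded set $K$), gives $|\hat f_0^\eps(k)-F_0(k)|\le C\eps\,\|f\|_{L^1(\R^n)}$ uniformly in $k\in K$, and integration over $K$ yields $\|\hat f_0^\eps-F_0\|_{L^1(K)}\le C(f)\,\eps$.

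The hard part is the first step rather than the estimates: the key point is to recognize that multiplying $f$ by the rescaled periodic factor $\psi_0(\cdot/\eps,\eps k)$ probes $f$ only through the values of its Fourier transform on the translated dual lattice $k+\Z^n/\eps$ — which is exactly what Lemma \ref{lem:periodic-functions-product} encodes — and then to use the compact Fourier support of $f$ together with $k\in Z/\eps$ to discard every lattice point except $k$ itself. I would also note that comparing $\hat f_0^\eps$ with $F_0$ (rather than with $e^{\ri\theta(k)}F_0$) presupposes fixing the phase of $\psi_0$ near $k=0$ by $\psi_0(\cdot,0)\equiv+(2\pi)^{-n/2}$ with continuous $k$-dependence, which is the natural normalization used throughout.
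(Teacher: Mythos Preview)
Your proof is correct and follows essentially the same route as the paper: Lemma~\ref{lem:periodic-functions-product} applied to $h(x)=f(x)e^{-\ri k\cdot x}$ and $\Phi(x)=\psi_0(x/\eps,\eps k)^*$ for the support statement, and the comparison $\psi_0(\cdot,\eps k)\approx\psi_0(\cdot,0)=|Y|^{-1/2}$ in $L^\infty$ for the pointwise bound on $K$. The only cosmetic differences are the order of the two steps and your appeal to analytic perturbation theory (simplicity of $\mu_0(0)$) where the paper writes down the elliptic equation for $\psi_0(\cdot,\eps k)-\psi_0(\cdot,0)$ directly; your closing remark on the phase normalization of $\psi_0$ is a useful clarification that the paper leaves implicit.
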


\begin{proof}
  {\em Step 1: $k\in K$.}  The difference of the two functions in
  \eqref {eq:thm2-SanSym} reads (for arbitrary $k\in K$)
  \begin{align*}
    \hat f_0^\eps(k) - F_0(k)
    = \int_{\R^n} f(x) e^{-\ri k\cdot x} 
    \left[ \psi_0\left( \frac{x}{\eps} , \eps k\right)^* 
      - \frac{1}{\sqrt{|Y|}} \right]\, dx\,.
   \end{align*}
   The periodic solution $\psi_0(.,0)$ to the wave vector $k=0$ is
   constant, by our normalization it is given as $\psi_0(y,0) =
   \sqrt{|Y|}^{-1}$ for every $y\in Y$.  Since $k$ ranges (in this
   step of the proof) in the bounded compact set $K$, we find the
   estimate
   \begin{equation}\label{eq:thm2-SanSym-b-4261}
     \sup_{k\in K} \sup_{x\in \R^n} 
     \left| \psi_0\left( \frac{x}{\eps} , \eps k\right)^* 
       - \frac{1}{\sqrt{|Y|}} \right| \le C\eps\,,
   \end{equation}
   for some constant $C = C(a_Y)$. This can be verified by writing the
   elliptic equation that is satisfied by the difference of the two
   solutions $\psi_0(.,\eps k)$ and $\psi_0(.,0) \equiv
   \sqrt{|Y|}^{-1}$. By elliptic regularity theory, the difference is
   of order $\eps$ in the norm $H^2(Y)$, which embeds continuously
   into $L^\infty(Y)$ (at this point we exploit $a_Y\in C^1$ to
   conclude the $H^2(Y)$-regularity and the assumption $n\le 3$ for
   the Sobolev embedding). Because of $f \in L^1(\R^n)$ we obtain
   \begin{align*}
     \left|  \hat f_0^\eps(k) - F_0(k) \right|
     \le C\eps  \| f \|_{L^1(\R^n)} \le C\eps\,,
     \label{eq:thm2-SanSym-b-5278}
   \end{align*}
   uniformly in $k\in K$. Since $K$ is compact, this provides also an
   $L^1(K)$-bound as in the statement of \eqref {eq:thm2-SanSym}.

   \smallskip {\em Step 2: $k\in (Z/\eps) \setminus K$.} The numbers
   $\eps\in (0,1]$ with $K\subset Z/\eps$ and $k\in (Z/\eps) \setminus
   K$ are kept fixed in the sequel. Our proof uses Lemma \ref
   {lem:periodic-functions-product} with the two functions $\Phi(x) :=
   \psi_0(x/\eps, \eps k)^*$ and $h(x) = f(x) e^{-\ri k\cdot
     x}$. These functions have the required regularities: $\Phi \in
   H^2_\per(Y_\eps)$ and $h\in L^2(\R^n)\cap L^1(\R^n)$.

   Regarding the Fourier transform of $h$ in grid-points $l/\eps\in
   \Z^n/\eps$ we calculate
   \begin{align*}
     \int_{\R^n} h(x) e^{\ri l\cdot x/\eps}\, dx 
     = \int_{\R^n} f(x) e^{-\ri k\cdot x} e^{\ri l\cdot x/\eps}\, dx 
     = (2\pi)^{n/2} F_0(k - (l/\eps))
     = 0\,.
   \end{align*}
   In the last step we exploited the fact that $k - (l/\eps) \not\in
   K$. This is obtained by a distinction of cases: For $l=0$, we have
   $k - (l/\eps) = k$, and we considered $k\not\in K$.  For $\Z^n \ni
   l \neq 0$, the number $k - (l/\eps)$ is outside $Z/\eps$ because of
   $k\in Z/\eps$. We obtain that the assumption of \eqref
   {eq:product-lemma} is satisfied.  The implication \eqref
   {eq:product-lemma} therefore implies
   \begin{equation}\label{eq:almost-there-Thm2}
     \hat f_0^\eps(k)
     = \int_{\R^n} f(x) e^{-\ri k\cdot x} 
     \psi_0\left( \frac{x}{\eps} , \eps k\right)^*
     = \int_{\R^n} h(x) \Phi(x)\, dx = 0\,.
   \end{equation}
   This verifies the claim \eqref {eq:thm2-SanSym-support} about the
   support of $\hat f_0^\eps$.  In turn, since both functions vanish
   outside $K$, it also implies the $L^1$-estimate \eqref
   {eq:thm2-SanSym} for the difference on all of $Z/\eps$.
\end{proof}

We use Theorem \ref {thm:SanSym-2} to simplify the representation of
$u_0^\eps$ of \eqref {eq:Bloch-complete-soln-eps-m0}.  We define a new
approximation as
\begin{equation}
  \label{eq:U-eps}
  \begin{split}
    U^\eps(x,t) := (2\pi)^{-n/2} \int_{K} F_0(k) e^{\ri k\cdot x}\, 
    \Re \left(e^{\ri t \sqrt{\mu_0^\eps(k)}}\right)\, dk\,.
  \end{split}
\end{equation}
Theorem \ref {thm:SanSym-2} allows to calculate, using once more
\eqref {eq:thm2-SanSym-b-4261} to compare $w_0^\eps(x,k) =
\psi_0(x/\eps, \eps k) e^{\ri k\cdot x}$ with $(2\pi)^{-n/2} e^{\ri
  k\cdot x}$,
\begin{align*}
  &\| u_0^\eps - U^\eps \|_{L^\infty((0,\infty)\times \R^n)}
  = \left\| \int_{K} \hat f_0^\eps(k) w_0^\eps(x,k)\, 
    \Re\left( e^{\ri t \sqrt{\mu_0^\eps(k)}}\right)\, dk
    - U^\eps \right\|_{L^\infty((0,\infty)\times \R^n)}\displaybreak[2]\\
  &\qquad\qquad\le \frac1{(2\pi)^{n/2}} \sup_{t\in (0,\infty)} \sup_{x\in \R^n}  \left|  
    \int_{K} \hat f_0^\eps(k) e^{\ri k\cdot x}\,  
    \Re\left( e^{\ri t \sqrt{\mu_0^\eps(k)}}\right)\, dk \right.\\
  &\qquad\qquad\qquad\qquad\qquad\qquad\qquad\qquad \left.   
    - \int_{K} F_0(k) e^{\ri k\cdot x} 
    \Re\left( e^{\ri t \sqrt{\mu_0^\eps(k)}}\right)\, dk
  \right| + C\eps\displaybreak[2]\\
  &\qquad\qquad\le C \left\| \hat f_0^\eps - F_0 \right\|_{L^1(Z/\eps)} 
  + C\eps \le C \eps.
\end{align*}
Due to the uniform error estimate in \eqref {eq:thm2-SanSym-b-4261},
the constant $C$ in the error term depends only on the norm $\| \hat
f_0^\eps(.) \|_{L^1(\R^n)}$.

We can combine this error estimate with the one obtained earlier for
the difference $\| u^\eps - u_0^\eps \|_{L^\infty((0,\infty),
  L^2(\R^n))}$. We use, given two norms $\| . \|_X$ and $\| . \|_Y$,
the new norm (weaker than both original norms) $\| u \|_{X+Y} :=
\inf\{\|u_1\|_X + \|u_2\|_Y : u = u_1 + u_2\}$. This allows to write
the combined estimate as 
\begin{equation}
  \label{eq:combined-u-U}
  \| u^\eps - U^\eps \|_{L^\infty((0,\infty),
  (L^\infty + L^2)(\R^n))} \le C\eps\,.  
\end{equation}

\subsection{Expansion of the dispersion relation}
\label{ssec.ExpansionDispersion}

The next step is to replace the eigenvalue $\mu_0$ by its Taylor
series. We note that in a neighborhood of $k=0$ the eigenvalue $\mu_0$
depends analytically on $k$ with $\mu_0(0)=\nabla \mu_0(0)=0$,
cf. \cite{MR1484944}. We denote the derivatives of $\mu_0$ as
$A_{lm}=\tfrac{1}{2}\partial_{k_l}\partial_{k_m}\mu_0(0)$,
$B_{lmn}=\tfrac{1}{6}\partial_{k_l}\partial_{k_m}\partial_{k_n}\mu_0(0)$,
and $C_{lmnq}=\tfrac{1}{24} \partial_{k_l}\partial_{k_m}\partial_{k_n}
\partial_{k_q}\mu_0(0)$.  The reflection symmetry $\mu_0(k)=\mu_0(-k)$
(valid without any structural assumptions on $a_Y$) provides that all
odd derivatives of $\mu_0$ vanish in $k=0$, see Remark \ref{rem:odd}
below. In particular, there holds $B=0$.  The Taylor series of $\mu_0$
in $k$ around $k = 0$ is therefore given as
\begin{equation}
  \label{eq:Taylor-mu}
  \mu_0(k)  =  \sum A_{lm} k_l k_m + \sum C_{lmnq} k_l k_m k_n k_q + O(|k|^6).
\end{equation}
Here and below, a bare sum is always over the repeated indices. The
expansion corresponds to an expansion of $\mu_0^\eps(k)$,
\begin{equation}
  \label{eq:Taylor-mu-eps}
  \mu_0^\eps(k)  = \frac1{\eps^2}  \mu_0(\eps k)  
  =  \sum A_{lm} k_l k_m + \eps^2 \sum C_{lmnq} k_l k_m k_n k_q + O(\eps^4)\,,
\end{equation}
the error is of order $\eps^4$, uniformly in $k\in K$.

In the spirit of this expansion, we next want to simplify further
$U^\eps$ of \eqref {eq:U-eps}.  We use $\Re(z) = \tfrac12 (z + z^*)$
and the Taylor expansion of the square root
\begin{equation}\label{eq:square-root-Taylor}
  \sqrt{a+c} = \sqrt{a} + \frac1{2\sqrt{a}} c + O(|c|^2)
\end{equation}
for $a\in \C\setminus \{0\}$ and $c\in \C$ with small absolute value.
We define $v^\eps$ (compare page 992 of \cite{SanSym}) as
\begin{equation}
  \label{eq:v-eps}
  \begin{split}
    v^\eps(x,t) :=  (2\pi)^{-n/2} \frac12 \sum_{\pm} \int_{K} F_0(k) 
    e^{\ri k\cdot x}
    \exp\left( \pm \ri t \sqrt{\sum A_{lm} k_l k_m}  \right)\qquad\qquad\\
    \times\   \exp\left( \pm \frac{\ri \eps^2}{2} t
      \frac{\sum C_{lmnq} k_l k_m k_n k_q}{\sqrt{\sum A_{lm} k_l k_m}}\right)\, dk      
  \end{split}
\end{equation}

We arrive at the following approximation result. We repeat that the
underlying observations are taken from \cite {SanSym}, our
contribution is to specify function spaces and to clarify
assumptions.
\begin{corollary}\label{cor:simplified-u}
  Let Assumption \ref {ass:a-f} be satisfied. Let $u^\eps$ be the
  solution of \eqref {eq:eps-wave} and let $v^\eps$ be defined by
  \eqref {eq:v-eps}. Then
  \begin{equation}
    \label{eq:approx-result-SanSym}
    \sup_{t\in [0,T_0 \eps^{-2}]} 
    \| u^\eps(t) - v^\eps(t) \|_{L^2(\R^n) + L^\infty(\R^n)} \le C \eps.
  \end{equation}
\end{corollary}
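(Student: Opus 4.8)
The plan is to chain together the estimate \eqref{eq:combined-u-U} already obtained, which controls $\|u^\eps - U^\eps\|$ in the $L^\infty((0,\infty),(L^2+L^\infty)(\R^n))$-norm by $C\eps$, with a new estimate for $\|U^\eps - v^\eps\|$ valid only on the time window $[0,T_0\eps^{-2}]$. Since the first estimate holds for all times, it certainly holds on $[0,T_0\eps^{-2}]$, so by the triangle inequality it suffices to prove
\begin{equation*}
  \sup_{t\in[0,T_0\eps^{-2}]} \|U^\eps(t) - v^\eps(t)\|_{L^\infty(\R^n)} \le C\eps .
\end{equation*}
Both $U^\eps$ and $v^\eps$ are given by explicit oscillatory integrals over the fixed compact set $K$ with the same amplitude $(2\pi)^{-n/2}F_0(k)e^{\ri k\cdot x}$, so the difference is
\begin{equation*}
  U^\eps(x,t) - v^\eps(x,t) = (2\pi)^{-n/2}\int_K F_0(k)\, e^{\ri k\cdot x}\,
  \Big[\Re\big(e^{\ri t\sqrt{\mu_0^\eps(k)}}\big) - \tfrac12\!\sum_\pm \exp(\pm\ri t\, \sigma^\eps(k))\Big]\, dk,
\end{equation*}
where $\sigma^\eps(k) := \sqrt{\sum A_{lm}k_lk_m} + \tfrac{\eps^2}{2}\,\big(\sum C_{lmnq}k_lk_mk_nk_q\big)\big/\sqrt{\sum A_{lm}k_lk_m}$ is the approximate phase appearing in \eqref{eq:v-eps}. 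Since $\|F_0\|_{L^1(K)}<\infty$ and $|K|<\infty$, the $L^\infty_x$-norm of this difference is bounded by $C$ times the supremum over $k\in K$ of the absolute value of the bracketed phase discrepancy. So the whole matter reduces to a pointwise (in $k$) estimate on phases, uniformly for $t\in[0,T_0\eps^{-2}]$.

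The key step is therefore to show that, for $k\in K$ and $t\le T_0\eps^{-2}$,
\begin{equation*}
  \Big|\, t\sqrt{\mu_0^\eps(k)} - t\,\sigma^\eps(k)\,\Big| \le C\eps ,
\end{equation*}
after which the elementary Lipschitz bound $|e^{\ri\alpha}-e^{\ri\beta}|\le|\alpha-\beta|$ (applied to $\pm$ separately) finishes the argument. To get this, I would use the Taylor expansion \eqref{eq:Taylor-mu-eps}, which writes $\mu_0^\eps(k) = a(k) + \eps^2 c(k) + O(\eps^4)$ with $a(k) = \sum A_{lm}k_lk_m$, $c(k)=\sum C_{lmnq}k_lk_mk_nk_q$, the error being uniform on $K$. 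Here $a(k)\ge 0$; one must observe that on $K\setminus\{0\}$ one has $a(k)>0$ (ellipticity of $A$, since $A$ is the homogenized matrix) and that near $k=0$ the integrand is harmless because $F_0$ is bounded and the measure of a small ball is $O(\delta^n)$ — alternatively, $\mu_0^\eps(k)$ and $a(k)$ both vanish like $|k|^2$ so the phase $t\sqrt{\mu_0^\eps(k)}$ stays bounded by $C t|k| $ and the difference by $Ct\eps^2|k|^3 \le C\eps$ after using $t\le T_0\eps^{-2}$ and $|k|\le\mathrm{diam}(K)$. Away from $k=0$, apply the square-root expansion \eqref{eq:square-root-Taylor} with $a=a(k)$ and $c = \eps^2 c(k) + O(\eps^4)$: this gives $\sqrt{\mu_0^\eps(k)} = \sqrt{a(k)} + \tfrac{\eps^2 c(k)}{2\sqrt{a(k)}} + O(\eps^4) = \sigma^\eps(k) + O(\eps^4)$, uniformly on $K$ (the implied constant involving $\min_K\sqrt{a(k)}$ and $\|c\|_{L^\infty(K)}$). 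Multiplying by $t\le T_0\eps^{-2}$ converts the $O(\eps^4)$ into $O(\eps^2)$, which is even better than the claimed $O(\eps)$.

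The main obstacle — and the only genuinely delicate point — is the behavior near $k=0$, where $\sqrt{a(k)}\to 0$ and the naive square-root expansion has a blowing-up coefficient. The cleanest way around it is the dichotomy sketched above: split $K$ into $K\cap B_\delta(0)$ and $K\setminus B_\delta(0)$ for a fixed small $\delta>0$; on the outer part the expansion is uniform as described, while on the inner part one bounds each phase directly by $C t|k|^2 \cdot \tfrac{1}{|k|} \le Ct|k|$ (using $\mu_0^\eps(k), a(k) = O(|k|^2)$ and that the $\eps^2$-correction term $\tfrac{\eps^2 c(k)}{2\sqrt{a(k)}}$ is $O(\eps^2|k|^3)$, hence its contribution to the phase is $O(\eps^2 t |k|^3)=O(\eps^2\delta^3\eps^{-2})=O(\delta^3)$), so the two bracketed exponentials differ by at most $Ct\eps^2|k|^3$ plus lower-order terms, which is $O(\eps)$ on the window; then integrate against $F_0\in L^1$. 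With $\delta$ fixed independently of $\eps$, both contributions are $\le C\eps$, and the corollary follows by combining with \eqref{eq:combined-u-U} through the triangle inequality in the $(L^2+L^\infty)(\R^n)$-norm.
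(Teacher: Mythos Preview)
Your proposal is correct and follows essentially the same route as the paper: reduce to estimating $U^\eps - v^\eps$ via \eqref{eq:combined-u-U}, then control the phase discrepancy $t\bigl(\sqrt{\mu_0^\eps(k)}-\sigma^\eps(k)\bigr)$ using the square-root expansion \eqref{eq:square-root-Taylor} together with \eqref{eq:Taylor-mu-eps}, obtaining an $O(\eps^4)$ error that becomes $O(\eps^2)$ after multiplication by $t\le T_0\eps^{-2}$. The paper's proof is terser and does not introduce your $B_\delta(0)$ dichotomy near $k=0$; this splitting is in fact unnecessary once one observes that the remainder in \eqref{eq:Taylor-mu-eps} is really $O(\eps^4|k|^6)$ (coming from \eqref{eq:Taylor-mu}), so that the error term $c^2/a^{3/2}$ in \eqref{eq:square-root-Taylor} behaves like $\eps^4|k|^5$ and stays uniformly bounded on $K$ through $k=0$.
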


\begin{proof}
  The estimate for the difference $u^\eps - U^\eps$ has been concluded
  in \eqref {eq:combined-u-U}. It remains to estimate the difference
  $v^\eps - U^\eps$ in the same norm.

  With the Taylor expansion of the square root \eqref
  {eq:square-root-Taylor} we see that the definitions of $U^\eps(t)$
  and $v^\eps(t)$ coincide, except for a factor of the form
  \begin{equation*}
    \exp\left( \pm  \ri t O(\eps^4) \right) = 1 + O(\eps^2),
  \end{equation*}
  uniformly in $t$ for $t\in [0,T_0 \eps^{-2}]$.  Because of $F_0 \in
  L^\infty(\R^n)$ and the boundedness of $K$, this implies \eqref
  {eq:approx-result-SanSym}.
\end{proof}

In view of Corollary \ref {cor:simplified-u}, it will no longer be
necessary to work with $u^\eps$, the solution to the original wave
equation in a heterogeneous medium.  We can, instead, restrict
ourselves to the analysis of the function $v^\eps$, defined by \eqref
{eq:v-eps}.

Note that Taylor expansions of Bloch eigenvalues are commonly used
also in the derivation of effective equations for envelopes of
nonlinear waves in periodic structures, see e.g. \cite{DD13,DU09}.

\subsection{Symmetries}
The structure of the tensors $A$ and $C$, defined via the expansion of
$\mu_0(k)$, is very simple if we consider symmetric material functions
$a_Y$. Indeed, we will see that $A$ and $C$ are fully characterized by
three real numbers $a^*$, $\alpha$, and $\beta$.

We assume that $a_Y(.)$ is symmetric with respect to reflections
across a hyperplane $\{y_j=0\}$, $j\in \{1,\dots,n\}$, and invariant
under coordinate permutations. To be more precise, we introduce the
following transformation of $\R^n$, defined for $y = (y_1,\dots,y_n)$
as
\begin{align*}
  S_i(y) &= (y_1,\dots,y_{i-1},-y_i,y_{i+1},\dots,y_n)\,,\\
  R_{ij}(y) &= (y_1,\dots,y_{i-1},y_j,y_{i+1},\dots,y_{j-1},y_i,y_{j+1},\dots,y_n)\,.
\end{align*}
Our symmetry assumption on $a_Y$ can now be formulated as
\begin{equation}
  a_Y(y)=a_Y(S_i(y))=a_Y(R_{ij}(y))  
  \quad \text{for all } i,j\in \{1,\dots,n\} \text{ and all }  y\in\R^n\,.
\label{eq:sym_ass}
\end{equation}
As we show next, the symmetry properties of $a_Y$ in $y$ imply the
identical symmetry properties of $\mu_0$ in $k$,
\begin{align}
  \mu_0(k) & = \mu_0(S_i(k)) = \mu_0(R_{ij}(k)) \quad \text{for all }
  i,j\in \{1,\dots,n\} \text{ and all } k\in Z \label{eq:RS_sym}.
\end{align}
In fact, \eqref{eq:RS_sym} holds also for all functions $\mu_m$, but
we exploit here only the symmetry of $\mu_0$. To show
\eqref{eq:RS_sym}, we express $\mu_0(k)$ with the variational
characterization, see Theorem XIII.2 in \cite{ReedSimon4}, as
\begin{equation}
  \mu_0(k) =  \min_{\stackrel{w\in H^1_\per(Y)}{\|w\|_{L^2(Y)}=1}} I(w,k), 
  \quad \text{ where } I(w,k) := \int_Y |[(\nabla +{\rm i}k)w](y)|^2_{a_Y(y)}\, dy\,.
  \label{eq:minmax}
\end{equation}
Using the symmetry of $a_Y$, we can calculate
\begin{equation}
  \begin{split}
    I(w,S_i(k)) &= \int_Y\, 
    \left|\left[(\nabla +{\rm i}S_i(k))w\right](y)\right|^2_{ a_Y(y)}\, dy \\
    &= \int_{S^{-1}_i(Y)} \,
    \left|\left[(\nabla +{\rm i}S_i(k))w\right] (S_i(\tilde{y}))\right|^2_{a_Y(\tilde{y})}\, d\tilde{y}\\
    &= \int_{Y}\,
    \left| S_i \left(\left[(\nabla +{\rm i}k) (w\circ S_i)\right]
        (\tilde{y})\right)\right|_{a_Y(\tilde{y})}^2\, d\tilde{y} 
    = I(w\circ S_i,k)\,.
  \end{split}
  \label{eq:sym_pf}
\end{equation}
Minimizing over the functions $w\circ S_i$ provides the same result as
minimizing over $w$, since with $w\in H^1_\per(Y)$ also $w\circ S_i
\in H^1_\per(Y)$. This provides \eqref{eq:RS_sym} for $S_i$. The
calculation for $R_{ij}$ is identical.

As a consequence of the symmetry, we obtain the following
characterization of the Taylor expansion coefficients $A$ and $C$.
\begin{lemma}
  \label{lem:sym}
  Let $a_Y$ have the symmetries \eqref{eq:sym_ass}. Then the tensors
  $A$ and $C$, defined in \eqref {eq:Taylor-mu}, satisfy
  \begin{align*}
    &A_{ii}=A_{11} =: a^*, \qquad\quad A_{ij} = 0,\\
    &C_{iiii} =C_{1111}=:\alpha, 
    \qquad\  C_{ijij}=C_{ijji}=C_{iijj}=C_{1122}=:\beta
  \end{align*}
  for all $i,j\in \{1,\dots,n\}$ with $i\neq j$. All entries of $C$,
  that are not mentioned above, vanish.
\end{lemma}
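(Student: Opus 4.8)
The plan is to exploit the symmetry \eqref{eq:RS_sym} of $\mu_0$ — established above from the variational characterization — together with the fact, recalled above, that $\mu_0$ is real-analytic near $k=0$ with $\mu_0(0)=\nabla\mu_0(0)=0$ and (see Remark \ref{rem:odd}) vanishing odd-order derivatives, so that its Taylor expansion is \eqref{eq:Taylor-mu} with real tensors $A$ and $C$ (real because $\mu_0$ is real-valued). I would then simply read off the constraints that the two types of symmetry present in \eqref{eq:RS_sym}, namely the reflections $S_i$ and the transpositions $R_{ij}$, impose on the coefficient tensors.

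First I would use the reflection symmetry $\mu_0(k)=\mu_0(S_i(k))$ for each fixed $i$. Since $S_i$ replaces $k_i$ by $-k_i$ and fixes the remaining components, a monomial $k_l k_m$ (resp.\ $k_l k_m k_n k_q$) acquires a factor $(-1)^r$, where $r$ counts the indices among $l,m$ (resp.\ $l,m,n,q$) equal to $i$. Comparing Taylor coefficients then forces $A_{lm}=0$ whenever exactly one of $l,m$ equals $i$, and $C_{lmnq}=0$ whenever the value $i$ occurs an odd number of times among the four indices. Letting $i$ range over $\{1,\dots,n\}$, this gives $A_{ij}=0$ for $i\neq j$, and shows that $C_{lmnq}$ vanishes unless every index value appearing among $l,m,n,q$ appears an even number of times; up to reordering, the only surviving fourth-order patterns are $(i,i,i,i)$ and $(i,i,j,j)$ with $i\neq j$. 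In particular, all entries of $C$ not listed in the lemma vanish.

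Next I would note that $A$ and $C$, being built from commuting partial derivatives of $\mu_0$, are fully symmetric tensors, so that an entry depends only on the multiset of its indices; in particular $C_{iijj}=C_{ijij}=C_{ijji}$ for $i\neq j$. To identify values across different index choices I would compare the Taylor expansions of $\mu_0(k)$ and $\mu_0(R_{ij}(k))$, which interchanges $k_i$ and $k_j$: matching the coefficients of $k_i^2$, of $k_i^4$, and of $k_i^2 k_j^2$ yields $A_{ii}=A_{jj}$, $C_{iiii}=C_{jjjj}$, and $C_{iijj}=C_{ppqq}$ for all admissible indices. Hence $A_{ii}$, $C_{iiii}$, and $C_{iijj}$ (for $i\neq j$) are each independent of the chosen indices, and setting $a^*:=A_{11}$, $\alpha:=C_{1111}$, $\beta:=C_{1122}$ gives exactly the assertion of the lemma.

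I do not expect a genuine obstacle here; the only point requiring care is the combinatorial bookkeeping in the reflection step — correctly identifying which fourth-order index patterns survive — together with the use of the full permutation symmetry of $A$ and $C$. An alternative, slightly slicker route is representation-theoretic: the space of homogeneous quadratic (resp.\ quartic) polynomials in $k$ invariant under the hyperoctahedral group generated by the $S_i$ and $R_{ij}$ is one-dimensional (resp.\ two-dimensional for $n\ge 2$, one-dimensional for $n=1$), spanned by $\sum_i k_i^2$ (resp.\ by $\sum_i k_i^4$ and $(\sum_i k_i^2)^2$); writing the quadratic and quartic parts of $\mu_0$ in these bases and comparing with \eqref{eq:Taylor-mu} produces the same relations among the components of $A$ and $C$.
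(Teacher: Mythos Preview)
Your proposal is correct and follows essentially the same route as the paper: use the reflection symmetry $\mu_0(k)=\mu_0(S_i(k))$ to kill all entries in which some index value occurs an odd number of times, use commutativity of partial derivatives for the full symmetry of $A$ and $C$, and use the coordinate-exchange symmetry $\mu_0(k)=\mu_0(R_{ij}(k))$ to identify the remaining entries across different index choices. The only point to tighten is the claim that matching the coefficient of $k_i^2 k_j^2$ under $R_{ij}$ yields $C_{iijj}=C_{ppqq}$: since $k_i^2 k_j^2$ is itself $R_{ij}$-invariant, this particular comparison gives nothing, and for $n\ge 3$ you must instead compare the coefficient of $k_i^2 k_j^2$ under a transposition $R_{jl}$ with $l\notin\{i,j\}$ (exactly as the paper does) to obtain $C_{iijj}=C_{iill}$.
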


\begin{proof}
  The proof uses the symmetry \eqref{eq:RS_sym}. The symmetry under
  $S_i$ implies that $\mu_0$ is an even function. Thus all derivatives
  of $\mu_0$ with an odd number of derivatives in one variable vanish
  at $k=0$. This proves $A_{ij} =0$ and, e.g., $C_{iiij} = 0$.  The
  fact that derivatives can be interchanged provides, e.g., $C_{iijj}
  = C_{ijij}$.

  The symmetry under $R_{ij}$ allows to calculate
  \begin{align*}
    \partial_{k_i}^2\mu_0 (k)
    = \partial_{k_i}^2 (\mu_0\circ R_{ij})(k)
    = [\partial_{k_j}^2 \mu_0](R_{ij}(k))\,.
  \end{align*}
  Evaluating in $k=0$ provides $A_{ii} = A_{jj}$. The analogous
  calculation for fourth order derivatives shows, e.g., $C_{iiii} =
  C_{jjjj}$. This proves the claim in the two-dimensional case.

  For $n\ge 3$ we can analogously use the symmetry under $R_{jl}$ to
  get $C_{iijj}=C_{iill}$ for all indices $1\leq i,j,l\le n$ with $i,
  j, l$ distinct.
\end{proof}

\begin{remark}
  \label{rem:odd}
  Independent of spatial symmetry assumptions on $a_Y$, odd
  derivatives of $\mu_0$ vanish in $k=0$.

  Let us sketch the proof for this fact: Due to the equivalence of the
  reflection $k\leftrightarrow -k$ and the complex conjugation in
  \begin{equation*}
    I(w,-k) = \int_Y |(\nabla-{\rm i}k)w|_{a_Y}^2
    = \int_Y |(\nabla+{\rm i}k) w^*|_{a_Y}^2 = I(w^*,k)
  \end{equation*}
  and the fact $w\in H^1_{\text{per}}(Y)\Leftrightarrow w^*\in
  H^1_{\text{per}}(Y)$, we get
  \begin{equation*}
    \mu_0(k)=\mu_0(-k)\quad\text{for all }k\in Z.
  \end{equation*}
 
  As in the proof of Lemma \ref{lem:sym} one obtains
  $\del_{k_i}\del_{k_j}\del_{k_l}\mu_0(k) =
  -\del_{k_i}\del_{k_j}\del_{k_l}\mu_0(-k)$ for all
  $i,j,l\in\{1,...,n\}$ and all $k\in Z$, and hence
  $\del_{k_i}\del_{k_j}\del_{k_l}\mu_0(0)=0$.  The argument can be
  used for arbitrary odd derivatives.
\end{remark}

\section{A well-posed weakly dispersive equation} 
\label{sec.wellposed}

A weakly dispersive equation that is related to the definition of
$v^\eps$ is (at this point, we correct a typo of \cite{SanSym}
regarding the sign before $C$)
\begin{equation}
 \label{eq:formal-weakly-disp}
  \del_t^2 u = A D^2 u - \eps^2 C D^4 u\,.
\end{equation}
Indeed, when applied to $v^\eps$ of \eqref {eq:v-eps}, the operator
$AD^2$ produces the factor $-A_{lm} k_l k_m$ under the integral, and
the operator $-\eps^2 CD^4$ produces the factor $-\eps^2 C_{lmnq} k_l
k_m k_n k_q$.  The second time derivative produces the
factor $$-A_{lm} k_l k_m - \eps^2 C_{lmnq} k_l k_m k_n k_q - (\eps^4
/4) (C_{lmnq} k_l k_m k_n k_q)^2 / (A_{lm} k_l k_m)$$ under the
integral. Therefore, up to an error of order $\eps^4$, the function
$v^\eps$ solves \eqref {eq:formal-weakly-disp}.

We emphasize that, in general, \eqref {eq:formal-weakly-disp} cannot
be used as an effective dispersive model. The fourth order operator
$-CD^4$ on the right hand side can be positive such that \eqref
{eq:formal-weakly-disp} is ill-posed. In the one-dimensional setting,
$C < 0$ is shown in \cite {Lamacz-Disp} (compare also
\cite{MR2219790}), hence the equation is necessarily ill-posed.
Section \ref{S:num-2D} includes a two-dimensional numerical example
where the numbers $\alpha$ and $\beta$, describing $C$, satisfy
$\alpha<0$ and $\beta>0$. Moreover, there holds $3\beta < |\alpha|$,
such that $-CD^4$ is a positive operator.

As a consequence, even though $v^\eps$ solves \eqref
{eq:formal-weakly-disp} up to an error of order $\eps^4$, we cannot
conclude that solutions to this equation provide approximations of
$v^\eps$. Even worse, it may be impossible to construct any solution
of \eqref {eq:formal-weakly-disp}.

\subsection{Decomposition of the operator for symmetric media}

As indicated in the introduction, our aim is now to replace \eqref
{eq:formal-weakly-disp} by a well-posed equation, which is equivalent
in all relevant powers of $\eps$. We therefore start from the two
tensors $A = a^*\, \id\in \R^{n\times n}$ and $C\in \R^{n\times
  n\times n\times n}$ of Lemma \ref {lem:sym} and consider the operator
\begin{equation}
  \label{eq:operator-C}
  C D^4 = \sum_{ijkl} C_{ijkl} \del_i \del_j \del_k \del_l
  =  \alpha \sum_{i=1}^n\del^4_i 
  + 3\beta \sum_{\stackrel{i,j=1}{i\neq j}}^n \del^2_i\del^2_j.
\end{equation}
To avoid confusion, we note that $\sum_{i\neq j} = 2\sum_{i<j}$.
Our aim is to construct coefficients $E \in \R^{n\times n}$ and $F \in
\R^{n\times n\times n\times n}$ such that the differential operator
can be re-written as
\begin{equation}
  \label {eq:def-coeff-EF}
  -C D^4 = E D^2 A D^2- F D^4\,,
\end{equation}
where $E$ and $F$ are positive semidefinite and symmetric, i.e.
\begin{align}\label{eq:admissibledecompose}
  \sum_{i,j,k,l=1}^n F_{ijkl}\xi_{ij}\xi_{kl}\geq 0\quad 
  \text{ for  every }\xi\in\R^{n\times n}
  \quad\text{ and }\,    F_{ijkl}=F_{klij}
\end{align}
and $\sum_{i,j=1}^n E_{ij}\eta_i\eta_j \geq 0$ for every $\eta\in\R^n$
and $E_{ij}=E_{ji}$ for $i,j,k,l \in\{1,...,n\}$.  The decomposition
result \eqref {eq:def-coeff-EF} allows, using the lowest order of
\eqref {eq:formal-weakly-disp}, to re-write the operator in the
evolution equation formally as
\begin{equation}
  \label{eq:replacement}
  - \eps^2 C D^4 u =  \eps^2 E D^2 A D^2 u -  \eps^2 F D^4 u
  = \eps^2 E D^2 \del_t^2 u - \eps^2 F D^4 u + O(\eps^4)\,.
\end{equation}
With this replacement in equation \eqref {eq:formal-weakly-disp}, we
obtain the well-posed equation \eqref {eq:weakly-dispersive}.

\begin{lemma}[Decomposability]\label{lem:decompose}
  Let $A\in \R^{n\times n}$ and $C\in \R^{n\times n\times n\times n}$
  be as in Lemma \ref {lem:sym}, given by three constants $a^*> 0$,
  $\alpha, \beta\in \R$, in particular with $CD^4$ given by \eqref
  {eq:operator-C}.  Then there exist symmetric and positive
  semidefinite tensors $E\in \R^{n\times n}$ and $F\in \R^{n\times
    n\times n\times n}$ such that $CD^4$ can be written as in
  \eqref{eq:def-coeff-EF}.

  Using $\{ a \}_+ := \max\{a,0\}$ to denote the positive part of a
  number $a$, a possible choice of $E$ and $F$ is
  \begin{align}
    \label{eq:decompose1}
    E_{ii}&=\frac1{a^*}\left(\{-\alpha\}_++3\{-\beta\}_+\right),
    \qquad E_{ij}=0,\\
    \label{eq:decompose2}
    F_{iiii}&=\{\alpha\}_++3\{-\beta\}_+,\hskip0.5cm
    F_{ijij}=\{-\alpha\}_++3\{\beta\}_+,
  \end{align}
  for all $i,j\in\{1,...,n\}$ with $i\neq j$.  All other entries of
  $F$ are set to zero.
\end{lemma}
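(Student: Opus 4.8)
The assertion is essentially a finite-dimensional verification: one exhibits the tensors $E$ and $F$ of \eqref{eq:decompose1}--\eqref{eq:decompose2} and checks, first, the operator identity \eqref{eq:def-coeff-EF}, and second, the symmetry and positive semidefiniteness required in \eqref{eq:admissibledecompose}. The plan is to reduce everything to an identity inside the two-dimensional space spanned by the fourth order operators $P := \sum_{i=1}^n \del_i^4$ and $Q := \sum_{i\neq j}\del_i^2\del_j^2$, of which every operator of the form \eqref{eq:operator-C} is a linear combination, and to use $P + Q = \Delta^2$.

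First I would write the three relevant operators in the basis $\{P,Q\}$. By \eqref{eq:operator-C}, $CD^4 = \alpha\,P + 3\beta\,Q$. Since $A = a^*\,\id$ and the proposed $E$ equals $e\,\id$ with $e := \tfrac1{a^*}(\{-\alpha\}_+ + 3\{-\beta\}_+)$, the composition factorizes: $E D^2\,A D^2 = e\,a^*\,\Delta^2 = e\,a^*\,(P+Q)$ (both factors are scalar multiples of $\Delta$, hence commute, and no ordering question arises). Finally, because the only nonzero entries of the proposed $F$ are $F_{iiii}$ and $F_{ijij}$ with $i\neq j$, one has $FD^4 = F_{iiii}\,P + F_{ijij}\,Q$; here the combinatorial bookkeeping must be kept straight, using $\sum_{i\neq j} = 2\sum_{i<j}$ and observing that a term $\del_i^2\del_j^2$ is produced by $FD^4$ precisely through the tuples $(i,j,i,j)$ and $(j,i,j,i)$, hence with coefficient $F_{ijij}$ and \emph{not} with an extra factor $3$, since $F$ is not assumed fully symmetric.

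Since $P$ and $Q$ are linearly independent, \eqref{eq:def-coeff-EF} now reduces to the two scalar equations $F_{iiii} = e\,a^* + \alpha$ and $F_{ijij} = e\,a^* + 3\beta$, both of which follow from the elementary identity $\{-a\}_+ + a = \{a\}_+$: it turns $e\,a^* + \alpha$ into $\{\alpha\}_+ + 3\{-\beta\}_+$ and $e\,a^* + 3\beta$ into $\{-\alpha\}_+ + 3\{\beta\}_+$, which is exactly \eqref{eq:decompose2}. For the structural requirements: $E = e\,\id$ is symmetric, and $e \ge 0$ because $a^* > 0$ and positive parts are nonnegative, so $E$ is positive semidefinite; the tensor $F$ satisfies $F_{ijkl} = F_{klij}$ since its only nonzero entries $F_{iiii}$ and $F_{ijij}$ are invariant under the pair swap $(ij) \leftrightarrow (kl)$; and for every $\xi \in \R^{n\times n}$ one obtains the diagonal quadratic form $\sum_{ijkl}F_{ijkl}\xi_{ij}\xi_{kl} = (\{\alpha\}_+ + 3\{-\beta\}_+)\sum_i \xi_{ii}^2 + (\{-\alpha\}_+ + 3\{\beta\}_+)\sum_{i\neq j}\xi_{ij}^2 \ge 0$, which is \eqref{eq:admissibledecompose}. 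Existence is thereby settled by the explicit construction.

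I do not expect a genuine obstacle here. The only delicate points are the combinatorial normalizations (the factor $3$ in \eqref{eq:operator-C}, the convention $\sum_{i\neq j} = 2\sum_{i<j}$, and the absence of a factor $3$ in $FD^4$), and the fact that $F$ is required to be positive semidefinite as a bilinear form on all of $\R^{n\times n}$ rather than only on symmetric matrices --- which holds automatically here because the associated quadratic form is diagonal in the entries of $\xi$. It may also be worth remarking that \eqref{eq:decompose1}--\eqref{eq:decompose2} is not the unique admissible choice: any value $e\,a^* \ge \max\{0, -\alpha, -3\beta\}$ yields $F_{iiii}, F_{ijij} \ge 0$, and the particular value $\{-\alpha\}_+ + 3\{-\beta\}_+$ is selected because it produces the clean positive-part formulas for the entries of $F$.
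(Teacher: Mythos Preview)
Your proposal is correct and follows essentially the same approach as the paper: both verify the positive semidefiniteness and symmetry of $E$ and $F$ by inspecting the diagonal quadratic form, and both establish the operator identity \eqref{eq:def-coeff-EF} by direct computation, the only difference being that you organize the latter via the basis $\{P,Q\}$ and the identity $\{-a\}_+ + a = \{a\}_+$, whereas the paper expands $ED^2AD^2 - FD^4$ in one chain of equalities. Your combinatorial bookkeeping (in particular the absence of a factor $3$ in the $Q$-coefficient of $FD^4$) is handled correctly.
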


With \eqref{eq:decompose1}--\eqref{eq:decompose2}, we introduce the
two differential operators
\begin{align*}
  E D^2&= \frac1{a^*}\left(\{-\alpha\}_++3\{-\beta\}_+\right)\sum_{i=1}^n\del^2_i
  =\frac1{a^*}\left(\{-\alpha\}_++3\{-\beta\}_+\right)\Delta,\\
  F D^4&= \left(\{\alpha\}_++3\{-\beta\}_+\right)\sum_{i=1}^n\del^4_i
  + \left(\{-\alpha\}_++3\{\beta\}_+\right)\sum_{i,j=1, i\neq
    j}^n\del^2_i\del^2_j.
\end{align*}

Since $\alpha$ and $\beta$ are real numbers, there are four different
possibilities for the signs of $\alpha$ and $\beta$. Distinguishing
these four cases, we can write the two differential operators in very
simple expressions.
 
\begin{remark} \label{r:cases}The operators $E D^2$ and $F D^4$ of
  \eqref{eq:decompose1}--\eqref{eq:decompose2} are given as follows.

  \begin{description}
  \item[Case 1.] $\alpha\leq0,\beta\leq 0$:
   \begin{equation*}
     ED^2=\frac1{a^*}(|\alpha|+3|\beta|)\Delta
     \hspace{0.3cm}\text{ and }\hspace{0.3cm}
     F D^4=3|\beta|\sum_{i=1}^n\del^4_i+|\alpha|
     \sum_{i,j=1, i\neq j}^n\del^2_i\del^2_j
   \end{equation*}
 \item[Case 2.]  $\alpha\leq0, \beta > 0$:
   \begin{equation*}
     ED^2=\frac{|\alpha|}{a^*}\Delta\hspace{0.3cm}\text{ and }\hspace{0.3cm}
     F D^4=(|\alpha|+3\beta)
     \sum_{i,j=1, i\neq j}^n\del^2_i\del^2_j.
   \end{equation*}
 \item[Case 3.] $\alpha > 0, \beta\leq 0$:
    \begin{equation*}
     ED^2=\frac{3|\beta|}{a^*}\Delta\hspace{0.3cm}\text{ and }\hspace{0.3cm}
     F D^4=(\alpha +3|\beta|)\sum_{i=1}^n\del^4_i
   \end{equation*}
 \item[Case 4.] $\alpha\geq0,\beta\geq 0$:  
   \begin{equation*}
     ED^2=0\hspace{0.3cm}\text{ and }\hspace{0.3cm}
     F D^4=\alpha\sum_{i=1}^n\del^4_i+3\beta
     \sum_{i,j=1, i\neq j}^n\del^2_i\del^2_j=CD^4.
   \end{equation*}
  \end{description} 
  We note that the first two cases (with $\alpha\leq 0$) are the
  relevant ones in our numerical examples.
\end{remark}

\begin{proof}[Proof of Lemma \ref{lem:decompose}] {\em Step
    1. Properties of $E$ and $F$.} By definition, $E$ is a nonnegative
  multiple of the identity in $\R^n$. The tensor is therefore positive
  semidefinite and symmetric.  Also $F$ is symmetric by definition.
  For $\xi\in\R^{n\times n}$ there holds
  \begin{align*}
    &\sum_{i,j,k,l=1}^n F_{ijkl}\xi_{ij}\xi_{kl}\\
    &\quad =\sum_{i=1}^n\left(\{\alpha\}_++3\{-\beta\}_+\right)(\xi_{ii})^2
    +\sum_{i,j=1, i\neq j}^n
    \left(\{-\alpha\}_++3\{\beta\}_+\right)(\xi_{ij})^2\geq 0.
  \end{align*}
  Hence $F$ is also positive semidefinite.

  {\em Step 2. Decomposition property.}  It remains to show $-C D^4 =
  E D^2 A D^2- F D^4$. For that purpose we calculate the right hand
  side as
  \begin{align*}
    &E D^2 A D^2 - F D^4\\
    &\quad=\frac1{a^*}\left(\{-\alpha\}_+
      +3\{-\beta\}_+\right)\sum_{i=1}^n\del^2_i\left(\sum_{j=1}^n
      a^* \del^2_j\right)
    -\left(\{\alpha\}_++3\{-\beta\}_+\right)\sum_{i=1}^n\del^4_i 
    \displaybreak[2]\\
    &\qquad - \left(\{-\alpha\}_++3\{\beta\}_+\right)\sum_{i,j=1, i\neq
      j}^n\del^2_i\del^2_j \displaybreak[2]\\
    &\quad = \left(\{-\alpha\}_++3\{-\beta\}_+\right)\sum_{i=1}^n\del^4_i +
    \left(\{-\alpha\}_++3\{-\beta\}_+\right)\sum_{i,j=1, i\neq
      j}^n\del^2_i\del^2_j\\
    &\qquad -\left(\{\alpha\}_++3\{-\beta\}_+\right)\sum_{i=1}^n\del^4_i -
    \left(\{-\alpha\}_++3\{\beta\}_+\right)\sum_{i,j=1, i\neq
      j}^n\del^2_i\del^2_j \displaybreak[2]\\
    &\quad= -\alpha\sum_{i=1}^n\del^4_i-3\beta\sum_{i,j=1, i\neq
      j}^n\del^2_i\del^2_j =-C D^4\,.
  \end{align*}
  This is the desired decomposition \eqref{eq:def-coeff-EF}.
\end{proof}

\subsection{An approximation result}

With the subsequent theorem, we provide the central error estimate for
our main result. We start from two tensors $A$ and $C$ (in the
application of the theorem they are defined by
\eqref{eq:Taylor-mu}), and assume that $C$ is decomposable with
tensors $E$ and $F$. With these four tensors we can study two objects:
The solution $w^\eps$ of \eqref {eq:weakly-dispersive}, and the
function $v^\eps$, defined by the representation formula \eqref
{eq:v-eps}. Our next theorem compares these two objects.

\begin{theorem}\label{thm:main-approximation}
  Let $A,C,E,F$ be tensors with the properties: $A$ is symmetric and
  positive definite, $\sum_{ij}A_{ij}\xi_i\xi_j\geq \gamma |\xi|^2$
  for some $\gamma>0$, $E$ and $F$ are positive semidefinite and
  symmetric, $C$ allows the decomposition \eqref {eq:def-coeff-EF}.
  Then the following holds.
  \begin{enumerate}
  \item {\bf Well-posedness.} Let $R\in L^1(0,T_0\eps^{-2} ;
    L^2(\R^n))$ be a right hand side and let $f\in H^2(\R^n)$ be an
    initial datum. We study an inhomogeneous version of equation
    \eqref {eq:weakly-dispersive},
    \begin{equation}\label{eq:weakly-dispersive-copy}
      \begin{split}
        &\del_t^2 w^\eps(x,t)
        - A D^2 w^\eps(x,t) - \eps^2 \del_t^2 E D^2 w^\eps(x,t) 
        + \eps^2 F D^4 w^\eps (x,t)=R(x,t)\,,\\
        &w^\eps(x,0) = f(x), \qquad \del_t w^\eps(x,0) = 0.
      \end{split}      
    \end{equation}
    for $x\in\R^n$ and $t\in (0,T_0\eps^{-2})$.  This equation has a
    unique solution $w^\eps \in L^\infty(0,T_0\eps^{-2} ; H^2(\R^n))
    \cap W^{1,^\infty}(0,T_0\eps^{-2} ; H^1(\R^n))$.\smallskip
  \item {\bf Approximation.} Let $v^\eps$ be defined by \eqref
    {eq:v-eps} with $F_0$ and $f$ related by \eqref {eq:f-cond}. Let
    $w^\eps$ be a solution of \eqref{eq:weakly-dispersive-copy} to
    $R\equiv 0$. Then
    \begin{equation}
      \label{eq:approx-result-v}
      \sup_{t\in [0,T_0 \eps^{-2}]}\|\del_t( v^\eps - w^\eps)(.,t) \|_{L^2(\R^n)}
      + \sup_{t\in [0,T_0 \eps^{-2}]}  \|\nabla( v^\eps - w^\eps)(.,t)\|_{L^2(\R^n)}
      \leq  C\eps^2\,,
    \end{equation}
    where $C>0$ denotes a constant that depends on $f$ and the
    coefficients, but is independent of $\eps$.
  \end{enumerate}
\end{theorem}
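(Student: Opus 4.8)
The plan is to treat the two assertions separately, using an energy method in Fourier space that simultaneously handles well-posedness and the approximation estimate. For \textbf{well-posedness}, I would apply the Fourier transform in $x$ to \eqref{eq:weakly-dispersive-copy}. Writing $\widehat{w^\eps}(k,t)$ for the transform, the equation becomes an ODE in $t$ for each $k$:
\begin{equation*}
  \bigl(1 + \eps^2\, (E D^2)\widehat{\phantom{x}}(k)\bigr)\, \del_t^2 \widehat{w^\eps}
  + \bigl((A D^2)\widehat{\phantom{x}}(k) + \eps^2 (F D^4)\widehat{\phantom{x}}(k)\bigr)\, \widehat{w^\eps}
  = \widehat{R}(k,t)\,,
\end{equation*}
where $(ED^2)\widehat{\phantom{x}}(k) = \sum_{ij} E_{ij} k_i k_j \ge 0$, $(AD^2)\widehat{\phantom{x}}(k) = \sum_{ij}A_{ij}k_ik_j \ge \gamma|k|^2$, and $(FD^4)\widehat{\phantom{x}}(k) = \sum_{ijkl}F_{ijkl}k_ik_jk_kk_l \ge 0$ by the positive semidefiniteness hypotheses. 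The leading coefficient $1 + \eps^2(ED^2)\widehat{\phantom{x}}(k) \ge 1$ is bounded below, so dividing through gives a well-posed linear second-order ODE with a nonnegative "frequency" $\Omega^\eps(k)^2 := \bigl((AD^2)\widehat{\phantom{x}}(k) + \eps^2(FD^4)\widehat{\phantom{x}}(k)\bigr)/\bigl(1 + \eps^2(ED^2)\widehat{\phantom{x}}(k)\bigr)$. The natural energy is $\mathcal{E}(k,t) := |\del_t\widehat{w^\eps}(k,t)|^2 + \Omega^\eps(k)^2 |\widehat{w^\eps}(k,t)|^2$; multiplying by $\overline{\del_t\widehat{w^\eps}}$ and taking real parts, one gets $\frac{d}{dt}\mathcal{E} \le \text{(const)}\,|\widehat{R}(k,t)|\,\mathcal{E}^{1/2}$ (after absorbing the leading-coefficient factor), hence a Gronwall bound on $\mathcal{E}(k,t)$ uniform on $[0,T_0\eps^{-2}]$ in terms of the initial data and $\|\widehat{R}(k,\cdot)\|_{L^1_t}$. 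Integrating over $k$ and using $\Omega^\eps(k)^2 \ge \gamma|k|^2$ on the one hand, and the $H^2$-control coming from $\Omega^\eps(k)^2 \lesssim |k|^2 + \eps^2|k|^4$ together with the explicit solution formula, yields $w^\eps \in L^\infty(0,T_0\eps^{-2}; H^2) \cap W^{1,\infty}(0,T_0\eps^{-2}; H^1)$. Uniqueness is immediate from linearity and the same energy identity with $R\equiv 0$.

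For the \textbf{approximation} estimate, the idea is that $v^\eps$ of \eqref{eq:v-eps} is an \emph{approximate} solution of the homogeneous equation \eqref{eq:weakly-dispersive-copy}. Since $v^\eps$ is given as a superposition of modes $e^{\ri k\cdot x}\exp(\pm\ri t\,\omega^\eps(k))$ with $\omega^\eps(k) := \sqrt{\sum A_{lm}k_lk_m}\,\bigl(1 + \tfrac{\eps^2}{2}\sum C_{lmnq}k_lk_mk_nk_q / \sum A_{lm}k_lk_m\bigr)$ supported on the compact set $K$, I would plug $v^\eps$ into the left-hand side of \eqref{eq:weakly-dispersive-copy}. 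As already noted after \eqref{eq:formal-weakly-disp}, applying $\del_t^2 - AD^2 + \eps^2 CD^4$ to a single mode produces a residual of size $O(\eps^4|k|^\bullet)$; and by the decomposition \eqref{eq:def-coeff-EF}, replacing $\eps^2 CD^4 v^\eps$ by $-\eps^2 ED^2\del_t^2 v^\eps + \eps^2 FD^4 v^\eps$ introduces a further error which, on each mode, equals $\eps^2 ED^2(AD^2 + \partial_t^2)v^\eps = \eps^2(ED^2)\widehat{\phantom{x}}(k)\cdot O(\eps^2|k|^\bullet)v^\eps$, again $O(\eps^4)$. Because $F_0\in L^\infty$ and $K$ is bounded, all these residuals, call their sum $R^\eps$, satisfy a bound $\sup_t\|R^\eps(\cdot,t)\|_{L^2(\R^n)} \le C\eps^4$ (the $t$-dependence in $v^\eps$ is purely through unimodular exponentials, so no growth in $t$). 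Moreover $v^\eps(\cdot,0) = f$ and $\del_t v^\eps(\cdot,0) = 0$, so $d^\eps := v^\eps - w^\eps$ solves \eqref{eq:weakly-dispersive-copy} with right-hand side $R^\eps$ and zero initial data. Applying the well-posedness energy estimate from part (1) to $d^\eps$ gives
\begin{equation*}
  \sup_{t\in[0,T_0\eps^{-2}]}\bigl(\|\del_t d^\eps(\cdot,t)\|_{L^2} + \|\nabla d^\eps(\cdot,t)\|_{L^2}\bigr)
  \le C\int_0^{T_0\eps^{-2}} \|R^\eps(\cdot,s)\|_{L^2}\,ds
  \le C\, T_0\eps^{-2}\cdot\eps^4 = C\eps^2\,,
\end{equation*}
which is exactly \eqref{eq:approx-result-v}.

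The main obstacle I anticipate is the careful bookkeeping of the residual $R^\eps$: one must verify that (i) the Taylor-expansion error in the dispersion relation $\omega^\eps(k)$ (from \eqref{eq:square-root-Taylor}) is genuinely $O(\eps^4)$ with constants controlled uniformly for $k\in K$ — this is where analyticity of $\mu_0$ near $0$ and compactness of $K$ enter — and (ii) that \emph{every} term arising from the substitution based on \eqref{eq:def-coeff-EF} and \eqref{eq:replacement}, including cross terms between the two $\pm$ branches and the factor $(1+\eps^2 ED^2)$, collapses to size $O(\eps^4)$ in $L^2(\R^n)$ rather than merely $O(\eps^2)$; a naive count gives $O(\eps^2)$ and only the precise cancellation built into the decomposition produces the extra two powers. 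A secondary subtlety is that the energy estimate must be applied with the correct norm: the conserved quantity controls $\del_t d^\eps$ in $L^2$ and $d^\eps$ in the $A$-weighted $H^1$ seminorm (equivalently $\|\nabla d^\eps\|_{L^2}$ by ellipticity of $A$), which is precisely what \eqref{eq:approx-result-v} asks for — but one should check that the $\eps^2 ED^2\del_t^2$ term does not spoil the positivity of the energy, which it does not since $1 + \eps^2(ED^2)\widehat{\phantom{x}}(k) \ge 1$.
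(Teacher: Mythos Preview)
Your proposal is correct and, for the approximation part, follows essentially the same argument as the paper: show that $v^\eps$ satisfies the weakly dispersive equation up to a residual $R^\eps$ with $\sup_t\|R^\eps(\cdot,t)\|_{L^2}\le C\eps^4$ (using the decomposition \eqref{eq:def-coeff-EF} twice, exactly as you outline), observe that $d^\eps=v^\eps-w^\eps$ solves the inhomogeneous problem with zero initial data, and apply the energy estimate to pick up the factor $T_0\eps^{-2}\cdot\eps^4=\eps^2$. One small slip: the ``further error'' you write as $\eps^2 ED^2(AD^2+\del_t^2)v^\eps$ should be $\eps^2 ED^2(AD^2-\del_t^2)v^\eps$; the paper makes this substitution explicit and then iterates once more to see it is $\eps^4 ED^2(CD^4 v^\eps-\eps^2\tilde R^\eps)$.

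For well-posedness, however, you take a genuinely different route. The paper works in physical space via a Galerkin scheme: it chooses a basis of $H^2(\R^n)$, solves the resulting finite-dimensional ODE system, tests with $\del_t w^\eps_K$, and uses the symmetry of $A,E,F$ to obtain the energy identity
\[
\tfrac12\del_t\!\int_{\R^n}\!\bigl\{|\del_t w|^2+\nabla w\cdot A\nabla w+\eps^2\nabla\del_t w\cdot E\nabla\del_t w+\eps^2 D^2w:FD^2w\bigr\}=\int_{\R^n}\! R\,\del_t w,
\]
followed by a Gronwall-type inequality (proved in an appendix) and passage to the limit $K\to\infty$. Your Fourier-side argument diagonalizes the problem to a scalar ODE per wavenumber and reads off the same energy from the symbol inequalities $1+\eps^2\sum E_{ij}k_ik_j\ge1$ and $\sum A_{ij}k_ik_j+\eps^2\sum F_{ijkl}k_ik_jk_kk_l\ge\gamma|k|^2$. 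This is more elementary and entirely appropriate here because all operators have constant coefficients; the paper's Galerkin approach is heavier but would survive generalization to variable coefficients. Either way one lands on the same a priori estimate \eqref{eq:apriori1}, which is all that the approximation step needs.
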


\begin{proof} {\it Well-posedness of problem
    \eqref{eq:weakly-dispersive-copy}.}
  We use the following concept of weak solutions. We say that
  $w^\eps\in L^\infty(0,T_0\eps^{-2};H^2(\R^n))$ with the property
  $\del_t w^\eps\in L^\infty(0,T_0\eps^{-2};H^1(\R^n))$ is a weak
  solution, if it satisfies $w^\eps(x,0)=f(x)$ in the sense of traces
  and if
  \begin{align}
    \begin{split}
      \label{eq:defweaksol}
      \int_0^{T_0 \eps^{-2}}\int_{\R^n} R\,\phi=&
      \int_0^{T_0 \eps^{-2}}\int_{\R^n}\{-\del_t w^\eps\del_t\phi 
      + \nabla\phi\cdot A\nabla w^\eps\}\\
      &+\eps^2\int_0^{T_0 \eps^{-2}}\int_{\R^n}\{-\nabla(\del_t\phi)
      \cdot E\nabla(\del_t w^\eps)+ D^2\phi: FD^2w^\eps\}
    \end{split}
  \end{align}
  for every test-function $\phi\in
  C_c^1([0,T_0\eps^{-2});H^2(\R^n))$. Here $D^2\phi : F D^2w^\eps$
  denotes the tensor product of $D^2\phi$ and $F D^2w^\eps$,
  $$D^2\phi : F D^2w^\eps
  := \sum_{i,j,k,l=1}^n\del_i\del_j\phi F_{ijkl}\del_k\del_lw^\eps. $$

  We prove the existence of a weak solution to problem
  \eqref{eq:weakly-dispersive-copy} with a Galerkin scheme.  We use a
  countable basis $\{\psi^k\}_{k\in\N}$ of the separable space
  $H^1(\R^n)$ and the finite-dimensional sub-spaces
  $V_K:=span\{\psi^1,...,\psi^K\}\subset H^1(\R^n)$.  The basis
  $\{\psi^k\}_{k\in\N}$ is chosen in such a way that the functions
  $\psi^k$ are of class $H^2(\R^n)$ and such that the family of
  $L^2$-orthogonal projections $P_K$ onto $V_K$ are bounded as maps
  $P_K : H^2(\R^n) \to H^2(\R^n)$. For every $K\in\N$ we search for
  approximative solutions $w^\eps_K$ of the form
  \begin{equation*}
    w^\eps_K:[0,T_0\eps^{-2}]\rightarrow V_K,\quad 
    w^\eps_K(t)= \sum_{k=1}^K b^\eps_k(t) \psi^k
  \end{equation*}
  with coefficients $b^\eps_k:[0,T_0\eps^{-2}]\rightarrow \R$.  We
  demand that $w^\eps_K$ solves \eqref{eq:weakly-dispersive-copy} in
  the weak sense, however, only for test-functions in the
  $K$-dimensional space $V_K$,
  \begin{align}
    \begin{split}
      \label{eq:weakformprojected}
      \int_{\R^n} R\psi^k =&
      \int_{\R^n}\{\del^2_t w^\eps_K\, \psi^k + \nabla\psi^k\cdot A\nabla w^\eps_K\}\\ 
      &+ \eps^2\int_{\R^n}\{\nabla\psi^k\cdot E\nabla(\del^2_t w^\eps_K) 
      + D^2\psi^k:F D^2w^\eps_K\}
    \end{split}
  \end{align}
  for every $k\in\{1,...,K\}$. For the initial data we demand that
  $\la w^\eps_K|_{t=0}, \psi^k\ra_{L^2(\R^n)}=\la f,
  \psi^k\ra_{L^2(\R^n)}$ and $\la \del_t w^\eps_K|_{t=0},
  \psi^k\ra_{L^2(\R^n)}=0$.  For every $K\in \N$, equation
  \eqref{eq:weakformprojected} is a $K$-dimensional system of ordinary
  differential equations of second order for the coefficient vector
  $(b^\eps_1(t), \hdots ,b^\eps_K(t))$, which can be solved
  uniquely. This provides the approximative solutions $w^\eps_K$.

  We now derive $K$-independent a priori estimates for the sequence
  $w^\eps_K$.  For that purpose we test equation
  \eqref{eq:weakly-dispersive-copy} with $\del_t w^\eps_K$ (more precisely,
  we multiply \eqref{eq:weakformprojected} by $\del_t b^\eps_k$ and
  take the sum over $k$). Exploiting the symmetry of $A,E$ and $F$ we
  obtain
  \begin{align}
    \begin{split}
      \label{eq:test1}
      \int_{\R^n} R\,\del_t w^\eps_K=&\frac12\del_t\int_{\R^n}\{|\del_t w^\eps_K|^2 
      + \nabla w^\eps_K\cdot A\nabla w^\eps_K\}\\
      &+\eps^2\frac12\del_t\int_{\R^n}\{\nabla(\del_t w^\eps_K)
      \cdot E\nabla(\del_t w^\eps_K) 
      + D^2 w^\eps_K:F D^2 w^\eps_K\}.
    \end{split}
  \end{align}
  We next integrate \eqref{eq:test1} over $[0,t_0]$, where
  $t_0\in[0,T_0\eps^{-2}]$ is arbitrary. We exploit the initial
  condition $w^\eps_K|_{t=0} = f_K$, where $f_K$ is the
  $L^2$-projection of $f$ onto $V_K$. The other initial condition is
  $\del_t w^\eps_K|_{t=0}=0$ and we arrive at
  \begin{align}
    &2\int_0^{t_0}\int_{\R^n}R\,\del_tw^\eps_K  
    + \int_{\R^n}\nabla f_K\cdot A\nabla f_K 
    + \eps^2\int_{\R^n} D^2f_K:FD^2f_K\nonumber\\
    &\quad= \int_{\R^n}\{|\del_t w^\eps_K|_{t=t_0}|^2 
    + \nabla w^\eps_K|_{t=t_0}\cdot A\nabla w^\eps_K|_{t=t_0}\}\nonumber\\
    &\qquad 
    +\eps^2\int_{\R^n}\{\nabla(\del_t w^\eps_K)|_{t=t_0}
    \cdot E\nabla(\del_t w^\eps_K)|_{t=t_0}
    +D^2w^\eps_K|_{t=t_0}:FD^2w^\eps_K|_{t=t_0}\}\nonumber\\
    \label{eq:test2}
    &\quad\geq\|\del_t w^\eps_K(.,t_0)\|^2_{L^2(\R^n)} 
    +\gamma\|\nabla w^\eps_K(.,t_0)\|^2_{L^2(\R^n)}\,.
  \end{align}
  In the last line we exploited that $A$ is positive definite with
  parameter $\gamma>0$ and that $E$ and $F$ are positive
  semi-definite. Introducing $Y(t) := \| \del_t w^\eps_K(.,t)
  \|^2_{L^2(\R^n)} + \gamma\|\nabla w^\eps_K(.,t)\|^2_{L^2(\R^n)}$
  for the right hand side of \eqref {eq:test2} and $Y_0 :=
  \int_{\R^n}\{\nabla f_K\cdot A\nabla f_K + \eps^2 D^2f_K:FD^2f_K\}$,
  we can calculate with the Cauchy-Schwarz inequality 
  \begin{equation}\label{eq:gronwalltype1}
    \begin{split}
      Y(t)&\leq 2\int_0^t \|R(., s)\|_{L^2(\R^n)} 
      \|\del_t w^\eps_K(.,s)\|_{L^2(\R^n)}\,ds   + Y_0\\
      &\leq 2\int_0^t\|R(., s)\|_{L^2(\R^n)}\sqrt{Y(s)}\,ds + Y_0 \,.      
    \end{split}
  \end{equation}
  We claim that a Gronwall-type argument leads from inequality
  \eqref{eq:gronwalltype1} to the estimate
  \begin{align}
    \label{eq:gronwalltype2}
    Y(t)\leq 2 Y_0
    + 2\left(\int_0^t\|R(.,s)\|_{L^2(\R^n)}\,ds\right)^2\,,
  \end{align}
  see Appendix \ref {app.Gronwall}.  With inequality
  \eqref{eq:gronwalltype2} at hand we finally obtain the following a
  priori estimate
  \begin{align}
    \begin{split}
      \label{eq:apriori1}
      \sup_{t\in[0,T_0\eps^{-2}]}
      Y(t)&=\sup_{t\in[0,T_0\eps^{-2}]} \left\{ \|\del_t w^\eps_K(.,t)\|^2_{L^2(\R^n)}
      + \gamma \|\nabla w^\eps_K(.,t)\|^2_{L^2(\R^n)} \right\}\\
      &\leq 2 Y_0 +  2\|R\|^2_{L^1(0,T_0\eps^{-2};L^2(\R^n))}\\
      &\leq 2(C(A)+\eps^2 C(F))\|f\|^2_{H^2(\R^n)} +
      2\|R\|^2_{L^1(0,T_0\eps^{-2};L^2(\R^n))}.
    \end{split}
  \end{align} 
  The bound in \eqref{eq:apriori1} is independent of $K$. Hence,
  possibly after passing to a subsequnce, we may consider the weak
  limit $K\rightarrow\infty$ of solutions $w^\eps_K$ of the Galerkin
  scheme. Due to the linearity of the problem, the limit provides a
  solution $w^\eps\in L^\infty(0,T_0\eps^{-2};H^1(\R^n))$ with $\del_t
  w^\eps\in L^\infty(0,T_0\eps^{-2};L^2(\R^n))$ to
  \eqref{eq:weakly-dispersive-copy} in the sense of distributions.
  Furthermore, $w^\eps$ satisfies exactly the same a priori estimates
  as its approximations $w^\eps_K$.  By differentiating \eqref
  {eq:weakly-dispersive-copy} with respect to $x$, one discovers that
  $w^\eps$ has in fact higher spatial regularity and that the
  distributional solution $w^\eps$ is in fact a weak solution in the
  sense of \eqref{eq:defweaksol}.  Note that the uniqueness of
  solutions to problem \eqref {eq:weakly-dispersive-copy} is a direct
  consequence of the a priori estimate \eqref{eq:apriori1}.  Hence,
  the weakly dispersive problem is well-posed.

  \medskip {\it Proof of the approximation result
    \eqref{eq:approx-result-v}.} By applying the differential operator
  $\del_t^2- A D^2 -\eps^2 \del^2_t ED^2+ \eps^2 F D^4$ to $v^\eps$,
  which is explicitly given in \eqref{eq:v-eps}, one immediately
  discovers that $v^\eps$ solves Equation
  \eqref{eq:weakly-dispersive-copy} with a right hand side of order
  $\eps^4$. More precisely, we calculate first with the decomposition
  of the operator $-C D^4 = E D^2 A D^2- F D^4$
  \begin{equation*}
    \del_t^2v^\eps - A D^2v^\eps 
    =  - \eps^2 C D^4 v^\eps + \eps^4 \tilde R^\eps
    = \eps^2 E D^2 A D^2 v^\eps - \eps^2 F D^4 v^\eps + \eps^4 \tilde R^\eps\,,
  \end{equation*}
  where the error term comes from the double differentiation of the
  last factor of $v^\eps$ with respect to time, 
  \begin{align*}
    \tilde R^\eps &:= - \frac{1}{8} (2\pi)^{-n/2} \sum_{\pm} \int_{k\in K} 
    \frac{(\sum C_{lmnq}k_lk_mk_nk_q)^2}{\sum A_{lm} k_lk_m}
    F_0(k)\\
    &\times\ \exp\left( \ri k\cdot x \pm \ri \sqrt{\sum A_{lm} k_l k_m} t \right)
       \exp\left( \pm \frac{\ri \eps^2}{2} t
      \frac{\sum C_{lmnq} k_l k_m k_n k_q}{\sqrt{\sum A_{lm} k_l k_m}}\right)\ dk.
  \end{align*}
  With this preparation we can now evaluate the application of the
  full differential operator as
  \begin{align}
    \begin{split}
      \label{eq:weakdispperturb1}
      &\del_t^2 v^\eps - A D^2v^\eps 
      + \eps^2 F D^4v^\eps - \eps^2 \del^2_t ED^2v^\eps\\
      &\qquad= \eps^2 E D^2 A D^2 v^\eps + \eps^4 \tilde R^\eps
      -\eps^2 \del^2_t ED^2v^\eps\\
      &\qquad= \eps^2 E D^2 (A D^2 v^\eps - \del^2_t  v^\eps)
      + \eps^4 \tilde R^\eps\\
      &\qquad= \eps^4 E D^2 (C D^4 v^\eps - \eps^2 \tilde R^\eps)
      + \eps^4 \tilde R^\eps
      =: R^\eps\,.
    \end{split}
  \end{align}
  In particular, $\sup_{t\in [0,T_0 \eps^{-2}]}
  \|R^\eps(.,t)\|_{L^2(\R^n)} \leq \tilde C\eps^4$ for some
  $\eps$-independent constant $\tilde C$.  Due to the linearity of the
  problem and the fact that $w^\eps$ is a solution to
  \eqref{eq:weakly-dispersive-copy} with $R\equiv 0$, the difference
  $v^\eps-w^\eps$ solves equation \eqref{eq:weakdispperturb1}
  \begin{align*}
    \del_t^2&(v^\eps-w^\eps)- A D^2(v^\eps-w^\eps) 
    + \eps^2 F D^4(v^\eps-w^\eps)-\eps^2 \del^2_t ED^2(v^\eps-w^\eps) =R^\eps,
  \end{align*}
  with vanishing initial data $(v^\eps-w^\eps)(.,0) =
  \del_t(v^\eps-w^\eps)(.,0) = 0$.

  By applying the a priori estimate \eqref{eq:apriori1} to the
  difference $(v^\eps-w^\eps)$ we obtain
  \begin{align*}
    &\sup_{t\in[0,T_0\eps^{-2}]} \left\{ \|\del_t (v^\eps-w^\eps)(.,t)\|^2_{L^2(\R^n)} 
    + \gamma \|\nabla (v^\eps-w^\eps)(.,t)\|^2_{L^2(\R^n)} \right\}\\
    &\qquad \leq 2\|R^\eps\|^2_{L^1(0,T_0\eps^{-2};L^2(\R^n))}
    \leq 2(T_0\eps^{-2}\|R^\eps\|_{L^\infty(0,T_0\eps^{-2};L^2(\R^n))})^2
    \leq  C\eps^4,
  \end{align*} 
  where in the last step we exploited that
  $\|R^\eps\|_{L^\infty(0,T_0\eps^{-2};L^2(\R^n))}\leq C\eps^4$.  This
  implies \eqref{eq:approx-result-v}.
\end{proof}

\paragraph{The main theorem.} Theorem \ref {thm:main} is a consequence
of the previous results.

\begin{proof} We have seen in Lemma \ref {lem:sol-expansion}, that the
  solution $u^\eps$ permits the expansion \eqref
  {eq:Bloch-complete-soln-eps} in Bloch-waves.  In Theorem \ref
  {thm:SanSym-1} we have seen that only the term $m=0$ has to be
  considered.

  We concluded with \eqref {eq:approx-result-SanSym} a smallness
  result, that $\| u^\eps - v^\eps \|_{L^2+L^\infty}$ is of order
  $\eps$. The norms coincide with the ones in the claimed result
  \eqref {eq:approx-result} for $\| u^\eps - w^\eps \|$.

  Finally, Theorem \ref {thm:main-approximation} provides the
  well-posedness claim and the estimate \eqref {eq:approx-result-v},
  which shows that norms of derivatives of $v^\eps - w^\eps$ are of
  order $\eps^2$. The subsequent Lemma \ref {lem:interpolation}
  provides the estimate for $\| v^\eps - w^\eps \|_{L^2+L^\infty}$ of
  order $\eps$, i.e. in the norm of \eqref {eq:approx-result}.
\end{proof}

\begin{lemma}\label{lem:interpolation}
  For $n\ge 1$ and $T>0$ fixed, let $g^\eps: \R^n\times [0,T/\eps^2]
  \to \R$ be a sequence of functions with $g^\eps(.,0) \equiv
  0$. Then, with an $\eps$-independent constant $C>0$, there holds
  \begin{equation}
    \label{eq:interpolation}
    \begin{split}
      &\sup_{t\in [0,T/\eps^2]} \| g^\eps(.,t) \|_{L^2(\R^n) + L^\infty(\R^n)} \\
      &\qquad \le C \eps^{-1} \sup_{t\in [0,T/\eps^2]}  
      \left\{ \| \del_t g^\eps(.,t) \|_{L^2(\R^n)} 
        + \| \nabla g^\eps(.,t) \|_{L^2(\R^n)} \right\}\,.      
    \end{split}
  \end{equation}
\end{lemma}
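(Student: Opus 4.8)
The plan is to control $\|g^\eps(\cdot,t)\|_{L^2(\R^n)+L^\infty(\R^n)}$ by the two quantities that are bounded on the right-hand side, namely $M:=\sup_{t}\|\del_t g^\eps(\cdot,t)\|_{L^2(\R^n)}$ and $M':=\sup_{t}\|\nabla g^\eps(\cdot,t)\|_{L^2(\R^n)}$, and to show $\sup_t\|g^\eps(\cdot,t)\|_{L^2+L^\infty}\le C\eps^{-1}(M+M')$; we may assume $M+M'<\infty$, so in particular $g^\eps(\cdot,t)\in H^1(\R^n)$ for (almost) every $t$. The starting point is the fundamental theorem of calculus: since $g^\eps(\cdot,0)\equiv0$ we have $g^\eps(\cdot,t)=\int_0^t\del_s g^\eps(\cdot,s)\,ds$, hence $\|g^\eps(\cdot,t)\|_{L^2(\R^n)}\le t\,M\le T\eps^{-2}M$ for $t\in[0,T/\eps^2]$. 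This crude $L^2$-bound is off by a factor $\eps^{-1}$ from what is claimed, so the $L^\infty$-component of the sum-norm has to be brought into play; the mechanism depends mildly on the dimension.

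For $n=1$ I would invoke the one-dimensional interpolation inequality $\|h\|_{L^\infty(\R)}^2\le 2\|h\|_{L^2(\R)}\|h'\|_{L^2(\R)}$, which follows from $h(x)^2=\int_{-\infty}^x 2hh'$. Applied to $h=g^\eps(\cdot,t)$, together with $\|g^\eps(\cdot,t)\|_{L^2}\le T\eps^{-2}M$ and $\|\del_x g^\eps(\cdot,t)\|_{L^2}\le M'$, it gives $\|g^\eps(\cdot,t)\|_{L^\infty(\R)}\le\sqrt{2T}\,\eps^{-1}\sqrt{MM'}\le\sqrt{2T}\,\eps^{-1}(M+M')$; since $\|u\|_{L^2+L^\infty}\le\|u\|_{L^\infty}$ (take the decomposition $u=0+u$), the claim follows.

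For $n\ge2$ I would localize in frequency at the scale $\eps$: write $g^\eps(\cdot,t)=g^\eps_\flat(\cdot,t)+g^\eps_\sharp(\cdot,t)$ with $\widehat{g^\eps_\flat}(\xi,t):=\hat g^\eps(\xi,t)\,\one_{\{|\xi|\le\eps\}}$. The high-frequency part is estimated in $L^2$: using $|\xi|>\eps$ and Plancherel, $\|g^\eps_\sharp(\cdot,t)\|_{L^2(\R^n)}^2\le\eps^{-2}\int_{|\xi|>\eps}|\xi|^2|\hat g^\eps(\xi,t)|^2\,d\xi\le\eps^{-2}\|\nabla g^\eps(\cdot,t)\|_{L^2(\R^n)}^2\le\eps^{-2}M'^2$. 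The low-frequency part is estimated in $L^\infty$ via $\|g^\eps_\flat(\cdot,t)\|_{L^\infty(\R^n)}\le(2\pi)^{-n/2}\int_{|\xi|\le\eps}|\hat g^\eps(\xi,t)|\,d\xi$; inserting $\hat g^\eps(\xi,t)=\int_0^t\del_s\hat g^\eps(\xi,s)\,ds$, exchanging the order of integration, and applying Cauchy–Schwarz on the ball $\{|\xi|\le\eps\}$ (of volume $\sim\eps^n$) together with Plancherel for $\del_s$ gives $\int_{|\xi|\le\eps}|\hat g^\eps(\xi,t)|\,d\xi\le C\,t\,\eps^{n/2}M\le C\,T\,\eps^{n/2-2}M\le C\,T\,\eps^{-1}M$, the last inequality because $n\ge2$ and $\eps\le1$. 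Adding the two pieces and recalling $\|u_1+u_2\|_{L^2+L^\infty}\le\|u_1\|_{L^2}+\|u_2\|_{L^\infty}$, then taking the supremum over $t\in[0,T/\eps^2]$, yields the claim.

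The step I expect to be the crux is purely conceptual: recognizing that the naive time-integration loses a factor $\eps^{-1}$, and that the remedy is frequency localization exactly at the scale $\rho=\eps$ — putting the (few) frequencies below $\eps$ into $L^\infty$, where, thanks to Cauchy–Schwarz on a ball of radius $\eps$, the long time interval only costs $\eps^{n/2}$ rather than $\eps^{-2}$, and all higher frequencies into $L^2$, where the uniform gradient bound supplies the missing factor $\eps$. For $n=1$ the gain $\eps^{1/2}$ from the small ball is insufficient and is replaced by the sharp inequality $\|h\|_\infty^2\le2\|h\|_2\|h'\|_2$; for $n\ge3$ the estimate is only more favorable. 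The remaining points — justifying the Fourier-side manipulations for $H^1$-functions, and the elementary properties of the sum-norm — are routine.
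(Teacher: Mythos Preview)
Your proof is correct and uses a genuinely different decomposition from the paper's. The paper works in physical space: it tiles $\R^n$ into cubes of side $\eps^{-1}$ (side $\eps^{-2}$ when $n=1$), puts the piecewise-constant average $\bar g^\eps$ into $L^\infty$ via the fundamental theorem in time together with the cube volume $|E_m^\eps|=\eps^{-n}$, and puts the fluctuation $g^\eps-\bar g^\eps$ into $L^2$ via the Poincar\'e inequality on each cube. Your argument is the Fourier-space dual of this: you split at frequency $|\xi|=\eps$, send low frequencies to $L^\infty$ (Cauchy--Schwarz on the small ball plays the role of the cube-volume factor) and high frequencies to $L^2$ (Plancherel plays the role of Poincar\'e). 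For $n=1$ you bypass any decomposition with the Gagliardo--Nirenberg inequality $\|h\|_\infty^2\le2\|h\|_2\|h'\|_2$, which is slicker than the paper's one-dimensional tiling. Your route is arguably cleaner and makes the scale-matching transparent; the paper's tiling argument is more elementary (no Fourier transform) and would adapt more readily to settings without translation invariance. Both proofs tacitly use that $\eps$ is bounded (to pass from $\eps^{n/2-2}$ to $\eps^{-1}$ when $n\ge3$), which is harmless in the asymptotic regime $\eps\to0$.
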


\begin{proof}
  We first consider $n\ge 2$.  Given $\eps>0$, we choose a tiling of
  the space as
  \begin{equation}
    \label{eq:tiling-1}
    \R^n = \bigcup_{m\in \Z^n} E_m^\eps\,,\qquad
    E_m^\eps = x_m + [0,\eps^{-1})^n\,,\qquad
    x_m = m \eps^{-1}\,.
  \end{equation}
  Given the function $g^\eps$ we define a piecewise constant function
  through an averaging procedure,
  \begin{equation}
    \label{eq:bargeps}
    \bar g^\eps(x,t) := \meanint_{E_m^\eps} g^\eps(\xi,t)\, d\xi\quad
    \text{ if } x\in E_m^\eps\,.
  \end{equation}
  The Poincar\'e inequality for functions with vanishing average
  allows to estimate
  \begin{align*}
     &\| g^\eps(.,t) -  \bar g^\eps(.,t) \|_{L^2(\R^n)}^2
     = \sum_m \| g^\eps(.,t) -  \bar g^\eps(.,t) \|_{L^2(E^\eps_m)}^2\\
     &\qquad \le C\, \diam(E_m^\eps)^2 \sum_m \| \nabla g^\eps(.,t) \|_{L^2(E^\eps_m)}^2
     \le C \eps^{-2} \| \nabla g^\eps(.,t) \|_{L^2(\R^n)}^2 \,.
  \end{align*}
  This provides estimate \eqref {eq:interpolation} for the part
  $g^\eps - \bar g^\eps$.

  In order to estimate $\bar g^\eps$, we use the fact that averaging
  does not increase the $L^2$-norm,
  \begin{align*}
    \sum_m |E_m^\eps| |\del_t \bar g^\eps(x_m,t)|^2
    = \| \del_t \bar g^\eps(.,t)\|_{L^2(\R^n)}^2
    \le \| \del_t g^\eps(.,t)\|_{L^2(\R^n)}^2\,.
  \end{align*}
  With the fundamental theorem of calculus we find
  \begin{align*}
    \| \bar g^\eps(.,t)\|_{L^\infty(\R^n)}^2 &= \max_m  |\bar g^\eps(x_m,t)|^2
    \le \max_m  \frac{T^2}{\eps^4} \sup_{s\in [0,T\eps^{-2}]} 
    |\del_t \bar g^\eps(x_m,s)|^2\\
    &\le  \frac{T^2}{\eps^4} |E_m^\eps|^{-1} 
    \sup_{s\in [0,T\eps^{-2}]} \sum_m |E_m^\eps| |\del_t \bar g^\eps(x_m,s)|^2\\
    &\le T^2 \eps^{n-4} \sup_{s\in [0,T\eps^{-2}]} \| \del_t g^\eps(.,s)\|_{L^2(\R^n)}^2\,.
  \end{align*}
  For $n\ge 2$, this provides estimate \eqref {eq:interpolation} for
  the remaining part $\bar g^\eps$.  

  \smallskip In the case $n=1$ we proceed in a similar way, using now
  a tiling with pieces of larger diameter,
  \begin{equation}
    \label{eq:tiling-2}
    \R = \bigcup_{m\in \Z} E_m^\eps\,,\qquad
    E_m^\eps = x_m + [0,\eps^{-2})\,,\qquad
    x_m = m \eps^{-2}\,.
  \end{equation}
  The estimate for $\bar g^\eps\in L^\infty(0,T\eps^{-2};
  L^\infty(\R^n))$ is obtained as above with the $\eps$-factor
  $\eps^{-4} |E_m^\eps|^{-1} = \eps^{-2}$ as desired. To estimate the
  difference $g^\eps - \bar g^\eps$ we use, in the case $n=1$, the
  same $L^\infty$-based norm. We calculate, for arbitrary $t\in
  (0,T\eps^{-2})$,
  \begin{align*}
    &\| g^\eps(.,t) - \bar g^\eps(.,t) \|_{L^\infty(\R^n)}
    = \sup_m \| g^\eps(.,t) -  \bar g^\eps(.,t) \|_{L^\infty(E^\eps_m)}\\
    &\qquad \le \sup_m \| \del_x g^\eps(.,t) \|_{L^1(E^\eps_m)} \le
    \sup_m\ \diam(E_m^\eps)^{1/2}\ \| \del_x g^\eps(.,t) \|_{L^2(E^\eps_m)}\,.
  \end{align*}
  Because of $\diam(E_m^\eps)^{1/2} = \eps^{-1}$, this shows \eqref
  {eq:interpolation}. We emphasize that we obtain a pure
  $L^\infty$-bound on the left hand side of \eqref {eq:interpolation}
  in the case $n=1$.
\end{proof}

\section{Numerical results}
\label{sec.numerics}
In order to illustrate the approximation result of Theorem
\ref{thm:main}, we numerically solve equations \eqref{eq:eps-wave} and
\eqref{eq:weakly-dispersive} in dimensions $n=1$ and $n=2$ with the
initial conditions in \eqref{eq:initial}. We use here a finite
difference method and resolve the solution everywhere; a multi-scale
numerical method that is taylored to the problem at hand was recently
developed, see \cite{MR3090137}.

One of the main practical advantages of the effective equation
\eqref{eq:weakly-dispersive} is its much smaller computational cost
compared to \eqref{eq:eps-wave}. In \eqref{eq:eps-wave} each period of
$a^\eps$ within the computational domain needs to be discretized to
accurately represent the medium. For a fixed domain of $O(1)$ size the
number of periods and hence the number of unknowns scales like
$\eps^{-n}$. On the other hand, for the effective equation
\eqref{eq:weakly-dispersive} the number of unknowns is independent of
$\eps$.

For the spatial discretization of \eqref{eq:eps-wave} we choose the
fourth order finite difference scheme of \cite{CJ96}. In one dimension
($n=1$) and for smooth $a^\varepsilon(x)$ the value of
$\partial_x(a^\varepsilon(x)\partial_x u)$ at the grid point $x=x_j$
is approximated by
\begin{align}
  ({\bf A}^\eps(\lambda)u)_j 
  :=& \frac{4}{3\Delta x}
  \left\{ a^\eps_{j+\frac{1}{2}}\frac{u_{j+1}-u_j}{\Delta x}
    -a^\eps_{j-\frac{1}{2}}\frac{u_{j}-u_{j-1}}{\Delta x}\right\}\\
  & - \frac{1}{6\Delta x}\left\{a^\eps_{j+1}\frac{u_{j+2}-u_j}{2\Delta
      x}-a^\eps_{j-1}\frac{u_{j}-u_{j-2}}{2\Delta x}\right\},
\end{align}
where the coefficients $a^\eps_{j}$ and $a^\eps_{j+\frac{1}{2}}$ are
defined via $a^\eps_{j}= \frac{1}{2\Delta x}
\int_{x_{j-1}}^{x_{j+1}}a^\eps(x)\, dx$ and $\
a^\eps_{j+\frac{1}{2}}=\frac{1}{\Delta x}
\int_{x_{j}}^{x_{j+1}}a^\eps(x)\, dx$, and where $\Delta x$ is the
spacing of the uniform grid $(x_j)_j$.  For the time discretization we
use the standard centered second order scheme resulting in the fully
discrete problem
\begin{equation*}
  u_j^{m+1}=2u_j^m-u_{j}^{m-1}+(\Delta t)^2 ({\bf A}^\eps(\lambda)u^m)_j.
\end{equation*}
In order to initialize the scheme, we set $u_j^0=f(x_j)$ and
approximate $u^1$ via the Taylor expansion $u^1=u^0+\frac{(\Delta
  t)^2}{2}{\bf A}^\eps(\lambda)u^0$.  For the evaluation of ${\bf
  A}^\eps(\lambda)u$ at the boundary of the computational domain we
assume $u=0$ outside the domain. This is legitimate as we choose a
large enough computational domain so that the solution is essentially
zero at the boundary.

The effective equation \eqref{eq:weakly-dispersive} is solved via a
second order centered finite difference scheme. For the second
derivatives we use the standard stencil $({\bf D}_2 w)_j:= (\Delta
x)^{-2}(w_{j+1}-2w_j+w_{j-1})$ and for the fourth derivatives we use
$({\bf D}_4 w)_j:= (\Delta x)^{-4}$ $(w_{j+2}-4w_{j+1}
+6w_j-4w_{j-1}+w_{j-2})$ so that the semidiscrete problem in the case
$n=1$ reads
\begin{equation*}
  \left(({\bf I}-\eps^2E{\bf D}_2)\partial_t^2 u\right)_j
  =\left((A{\bf D}_2 -\eps^2 F {\bf D}_4)u\right)_j.
\end{equation*}
We recall that $E$ and $F$ are scalars when $n=1$. Discretization in time
is performed analogously to the case of equation \eqref{eq:eps-wave}.

The above described methods generalize to $n \geq 2$ dimensions in a
natural way, see \cite{CJ96} for equation \eqref{eq:eps-wave} with
$n=2$.

In general the parameters $a^*, \alpha$, and $\beta$, which determine
the coefficients $A,E$ and $F$ in the effective equation, need to be
computed numerically.  They can be computed by numerically
differentiating the eigenvalue $\mu_0$ as defined in
\eqref{eq:Taylor-mu}.

\subsection{One space dimension}

We choose the material function $a_Y(y) = 1.5+1.4\cos(y)$ and the
initial data $f(x)=e^{-0.4x^2}$ and numerically investigate the
quality of the approximation given by the effective equation. For the
coefficients $A = a^*$ and $C=\alpha$ we find
\begin{align*}
a^*  \approx  0.5385,  \quad  \alpha  \approx -0.5853,
\end{align*}
so that $AD^2=a^*\partial_x^2\approx 0.5385\, \partial_x^2, ED^2 =
-\tfrac{1}{a^*}C \partial_x^2 \approx 1.0869\, \partial_x^2$.

Equation \eqref{eq:eps-wave} was solved with $\Delta x = 2\pi\eps/30$
and $\Delta t = 0.008$ and \eqref{eq:weakly-dispersive} was solved
with $\Delta x \approx 2\pi/100$ and $\Delta t = 0.005$. In
Fig. \ref{F:compare_eps_1D} we plot $u^\eps$ and $w^\eps$ for
$\eps=0.05$ at $t=400=\eps^{-2}$ and for $\eps=0.1$ at
$t=200=2\eps^{-2}$. We see that in both cases the main peak and the
first few dispersive oscillations are well approximated by the
effective model. In the latter case, i.e. with $t$ relatively large
for a given $\eps$, a slight disagreement in the wavelength of the
tail oscillations is visible. Fig. \ref{F:compare_eps_1D} additionally
shows oscillations traveling faster than the main pulse. These
oscillations are physically meaningful as their speed is below the
maximal allowed propagation speed $\hat{c}:=|Y|\int_\R a_Y^{-1/2}(y)
dy$, see \cite{Lamacz-Promotion}, marked by the vertical dotted line.
\begin{figure}[h!]
\begin{center}
 \epsfig{figure = 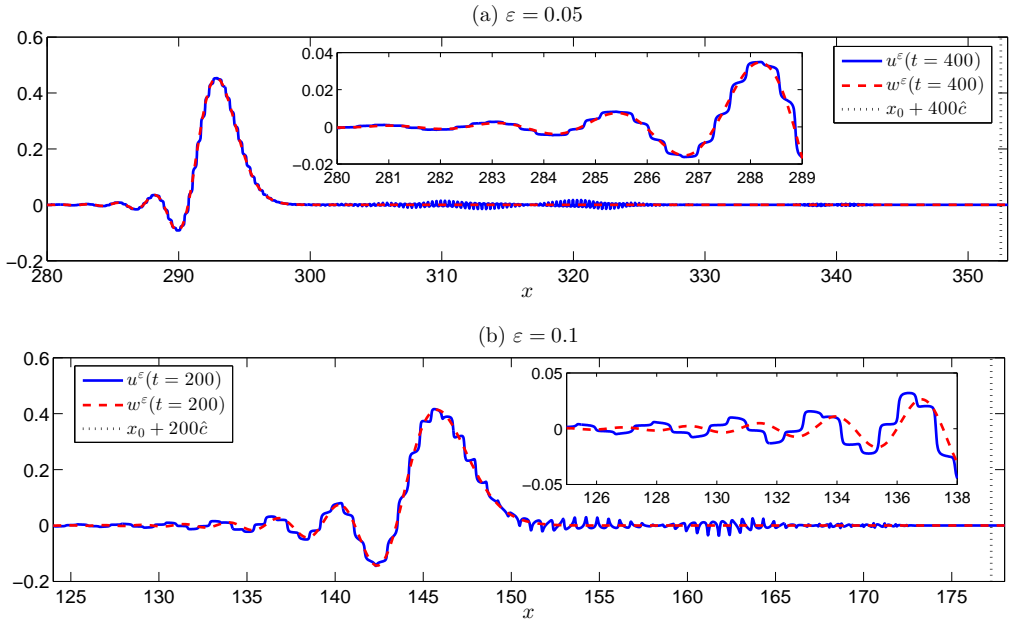,scale=0.50}
 \caption{\label{F:compare_eps_1D} \small One-dimensional equation:
   the solutions $u^\eps$ and $w^\eps$ for $a_Y(y)=1.5+1.4\cos(y)$ and
   $f(x)=e^{-0.4x^2}$ are compared. Only the right propagating part of
   the solution is plotted. In (a) we have $\eps = 0.05$ and in (b)
   $\eps=0.1$. The insets zoom in on the dispersive oscillations to
   the left of the main peak.}
\end{center}
\end{figure}

In Fig. \ref{F:conv_eps_1D} we study the convergence of the
$L^2(\R)-$error for the same material function and initial data as
above. The error is computed at $\eps=0.2, 0.1$ and $0.05$ and
$t=\eps^{-2}$. The error values are approximately $0.1954, 0.0977,
0.0494$. Clearly, the numerical convergence is close to linear, in
agreement with Theorem \ref{thm:main}.
\begin{figure}[h!]
\begin{center}
  \epsfig{figure = 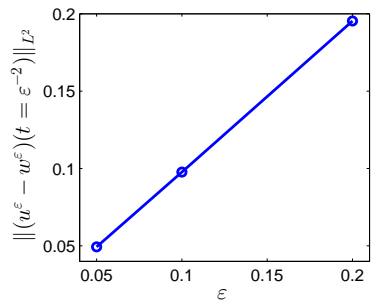,scale=0.52}
  \caption{\label{F:conv_eps_1D} \small Convergence of the $L^2$-error
    $\|u^\eps-w^\eps\|_{L^2}$ at $t=\eps^{-2}$ for
    $a_Y(y)=1.5+1.4\cos(y)$, $f(x)=e^{-0.4x^2}$, and the three values
    $\eps =0.2$, $\eps =0.1$, and $\eps =0.05$. We emphasize that this
    is a severe test for convergence: in both steps, $\eps$ is halved
    and the time instance is quadrupled.}
\end{center}
\end{figure}

\subsection{Two space dimensions}\label{S:num-2D}

Full two-dimensional ($n=2$) simulations for small values of $\eps>0$
and time intervals of order $O(\eps^{-2})$ are computationally
expensive due to the need to discretize each period of size
$O(\eps)\times O(\eps)$ in a domain of size $O(\eps^{-2})\times
O(\eps^{-2})$. We therefore perform instead a simulation that is
designed to mimic the long time behavior of a solution originating
from localized initial data. After a long time the solution develops a
large, close to circular, front. Within the strip
\begin{align*}
  \Omega_s : = x\in \R\times (-\eps\pi,\eps\pi)
\end{align*}
we can expect that the front is nearly periodic in the
$x_2-$direction. Therefore, we perform tests on $\Omega_s$ with
periodic boundary conditions in $x_2$, and initial data that are
localized in $x_1$ and constant in $x_2$. Our choice is to take $f(x)=
e^{-0.6x_1^2}, \ x\in \Omega_s$. We select a material function that
describes a smoothed square structure, namely
\begin{align}\label{eq:a_2d}
  a_Y(y) =& (1+ c(y) - \overline{c})I, \\
  \quad c(y) =&
  \frac{1}{8}\prod_{j=1}^2\left[1+\tanh\left(4(y_j+\tfrac{3}{5}\pi)\right)\right]
  \left[1-\tanh\left(4(y_j-\tfrac{3}{5}\pi)\right)\right],\notag
\end{align}
where $\overline{c}:=\tfrac{1}{|Y|}\int_Yc(y)dy$ and 
$I = \left(\begin{smallmatrix}1 & 0\\ 0 & 1\end{smallmatrix}\right)$. 
This choice ensures a relatively large value of the dispersive
coefficient $\alpha$. We find
\begin{align*}
  a^* \approx 0.5808, \quad \alpha \approx -0.3078, \quad \beta
  \approx 0.0515.
\end{align*}
These values correspond to case 2 in Remark \ref{r:cases} so that
$AD^2=a^*\Delta \approx 0.5808 \,\Delta$,
$ED^2=\tfrac{|\alpha|}{a^*}\Delta\approx 0.5300 \, \Delta,$
$FD^4=(|\alpha|+3\beta)\partial_{x_1}^2\partial_{x_2}^2\approx
0.4623\, \partial_{x_1}^2\partial_{x_2}^2$.  Due to the
$x_2-$independence of the initial data, the solution of the effective
model \eqref{eq:weakly-dispersive} on $\Omega_s$ stays constant in
$x_2$ so that $FD^4$ can be dropped and \eqref{eq:weakly-dispersive}
becomes
\begin{align*}
  \partial_t^2 w^\eps = 0.5808 \, \partial_{x_1}^2w^\eps +\eps^2
  \, 0.53 \, \partial_{x_1}^2\partial_t^2 w^\eps.
\end{align*}
In the simulations of \eqref{eq:eps-wave} we use $\Delta x_1 =\Delta x_2 =
2\pi\eps/30$ and $\Delta t = 0.004$, and in
\eqref{eq:weakly-dispersive} we use $\Delta x_1 =2\pi/100$ and $\Delta t
= 0.01$.

In Fig. \ref{F:2D_plot} the main part of the right propagating half of
the solution $u^\eps$ is plotted for $\eps=0.1$ at
$t=100=\eps^{-2}$. One clearly sees dispersive oscillations behind the
main pulse.
\begin{figure}[h!]
  \begin{center}
    \epsfig{figure = 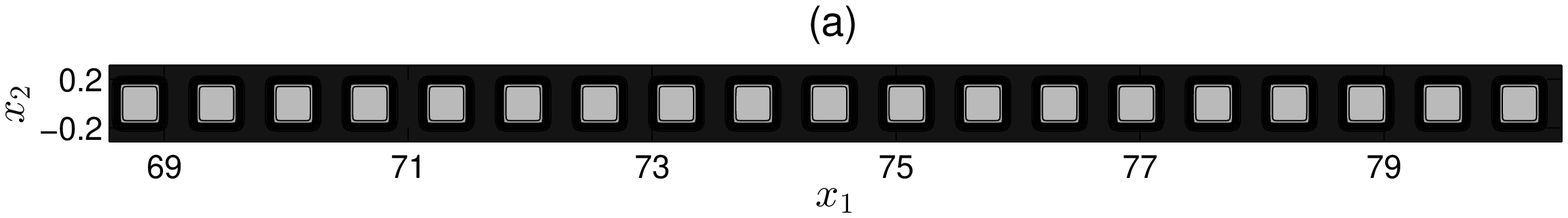,scale=0.45} \epsfig{figure =
      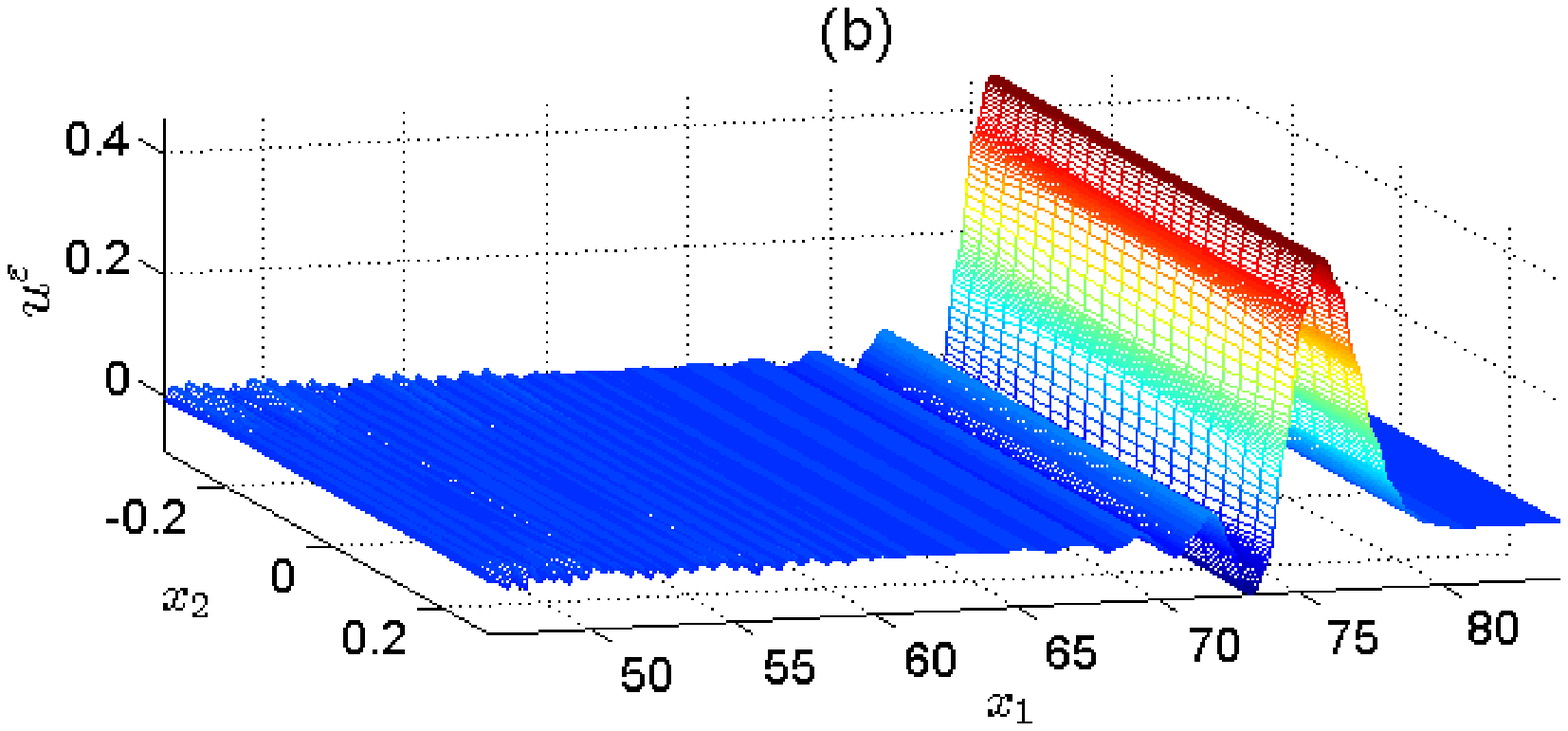,scale=0.53} \epsfig{figure =
      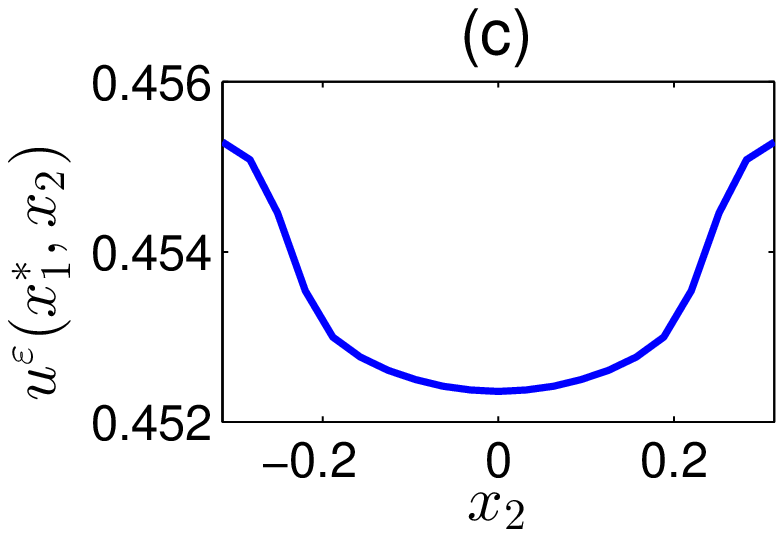,scale=0.44}
    \caption{\label{F:2D_plot} \small Two-dimensional equation: (a)
      The periodic structure $a^\eps(x)$ given by \eqref{eq:a_2d} over
      a section of the strip $\Omega_s$. (b) The main part of the
      right propagating part of the solution $u^\eps$ at $t=100$ for
      $\eps=0.1$ and $f(x)=e^{-0.6x_1^2}$. (c) The $x_2$-profile of
      $u^\eps$ at $x_1=x_1^*$ with $x_1^*$ being the position of the
      peak of the pulse.}
  \end{center}
\end{figure}
Fig.\,\ref{F:compare_2D} shows the agreement between $w^\eps$ and the
$x_2-$mean of $u^\eps$ at $\eps=0.1$ and $t=\eps^{-2}$.
\begin{figure}[h!]
  \begin{center}
    \epsfig{figure = 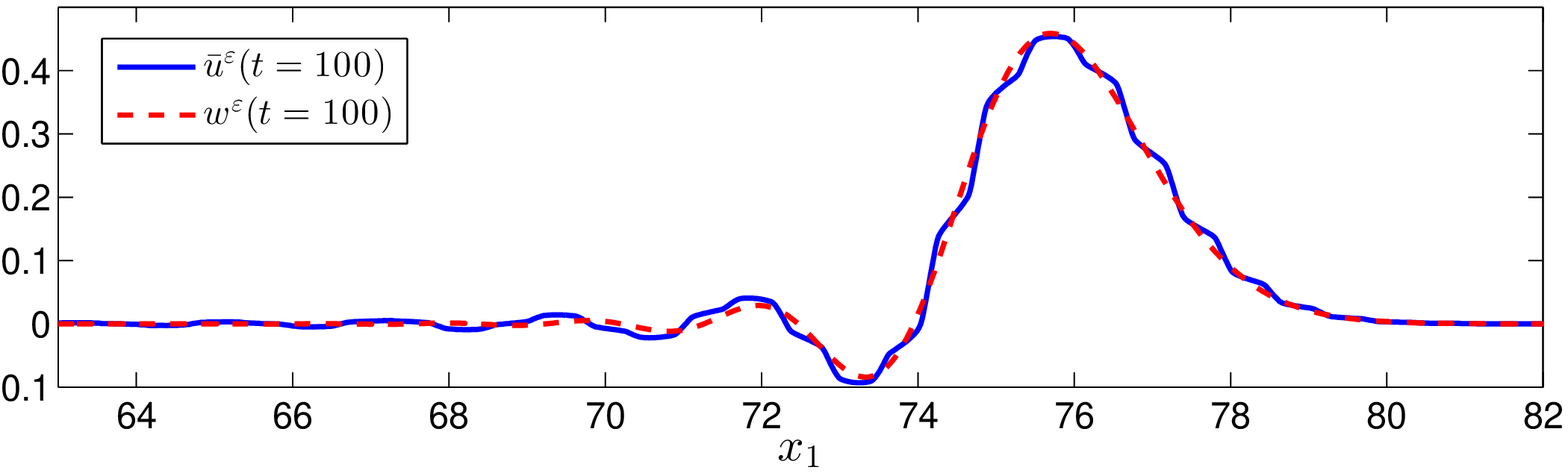,scale=0.6}
    \caption{\label{F:compare_2D} \small Comparison of
      $\bar{u}^\eps(x_1,t)
      := \tfrac{\eps}{2\pi}\int_{-\eps\pi}^{\eps\pi}u^\eps(x_1,x_2,t)dx_2$
      and $w^\eps$ at $\eps=0.1$ and $t=\eps^{-2}$ for $a^\eps(x)$
      given by \eqref{eq:a_2d} and $f(x)=e^{-0.6x^2}$.}
  \end{center}
\end{figure}

\section*{Conclusions}
We have performed an analysis of wave propagation in multi-dimensional
heterogeneous media (periodic with length-scale $\eps>0$). It is
well-known that for large times, solutions cannot be approximated well
by the homogenized second order wave equation. We have provided here a
suitable well-posed dispersive wave equation of fourth order that
describes the original solution $u^\eps$ on time intervals of order
$O(\eps^{-2})$. Our analytical results provide an error estimate of
order $O(\eps)$ between $u^\eps$ and the solution $w^\eps$ of the
dispersive equation. The coefficients of the effective equation are
computable from the dispersion relation, which, in turn, is given by
eigenvalues of a cell-problem. The qualitative agreement between
$u^\eps$ and $w^\eps$ is confirmed by one-dimensional numerical tests,
that even provide a confirmation of the linear convergence of the
error in $\eps$.  In two space dimensions we can observe the validity
of the dispersive equation in a simplified setting, computing
solutions on a long strip.

\appendix

\section{$H^1$-convergence of the Bloch expansion}
\label {app.H1-convergence}

Our aim here is to show that relation \eqref {eq:Bloch-complete-eps}
holds as a convergence of the partial sums in $H^1(\R^n)$.  Since
$\eps>0$ is fixed, for brevity of notation we may as well conclude the
$H^1(\R^n)$-convergence in \eqref {eq:Bloch-complete} for $g\in
H^2(\Omega)$.

With the operator $L := \nabla\cdot (a_Y(y) \nabla)$ we can expand the
two $L^2(\R^n)$-functions $g$ and $h = Lg$ in a Bloch series,
\begin{align*}
  g &= L^2(\R^n)-\lim_{M\to\infty} g^M\quad\text{ for }\quad 
  g^M(y) :=  \sum_{m=0}^M \int_Z \hat g_m(k) w_m(y,k)\, dk\,,\\
  L g = h &= L^2(\R^n)-\lim_{M\to\infty} h^M\quad\text{ for }\quad 
  h^M(y) := \sum_{m=0}^M \int_Z \hat h_m(k) w_m(y,k)\, dk\,.  
\end{align*}
The formulas for $\hat g_m(k)$ and $\hat h_m(k)$ provide, by
construction of $w_m$ as an eigenfunction of $L$ and the symmetry of
$L$,
\begin{align*}
  \hat h_m(k) = \int_{\R^n} (L g)(y) w_m(y,k)^*\, dy
  = \int_{\R^n} g(y) L w_m(y,k)^*\, dy
  =  \mu_m(k) \hat g_m(k)\,.
\end{align*}
In consequence, we obtain
\begin{align*}
  L g^M(y) &= \sum_{m=0}^M \int_Z \hat g_m(k) \mu_m(k) w_m(y,k)\, dk
  = \sum_{m=0}^M \int_Z \hat h_m(k) w_m(y,k)\, dk = h^M(y)\,.
\end{align*}
The right hand side converges in $L^2(\R^n)$ to $h = Lg$. The elliptic
operator $L$ allows to conclude from the $L^2(\R^n)$-convergence $L
g^M \to Lg$ the $H^1(\R^n)$-convergence $g^M\to g$.

\section{Variant of the Gronwall inequality}
\label {app.Gronwall}

We provide now the proof of the Gronwall-type inequality
\eqref{eq:gronwalltype2}.  Let $Y:[0,T]\to [0,\infty)$ be a function
such that, for a constant $Y_0\ge 0$, the relation
\begin{align}
  \label{eq:gronwall3}
  Y(t)\leq 2\int_0^t \|R(., s)\|_{L^2(\R^n)}\sqrt{Y(s)}\,ds + Y_0
\end{align}
holds for all times $t\in [0,T]$. We claim that then
\begin{align} 
  \label{eq:gronwall4}
  Y(t)\leq 2\left(\int_0^t \|R(.,s)\|_{L^2(\R^n)}\,ds\right)^2+2 Y_0
\end{align}
holds for all times $t\in [0,T]$.

\medskip
For the proof we define $Z(t)$ to be the integral on the right hand
side of \eqref{eq:gronwall3},
$$Z(t):=2\int_0^t\|R(., s)\|_{L^2(\R^n)}\sqrt{Y(s)}\,ds.$$ 
Then $Z(0)=0$ and, due to the assumption \eqref{eq:gronwall3},
\begin{align*}
  &\frac{d}{dt}Z(t)=2\|R(., t)\|_{L^2(\R^n)}\sqrt{Y(t)}
  \leq2\|R(., t)\|_{L^2(\R^n)}\sqrt{Z(t)+ Y_0}\,.
\end{align*}
We conclude that
\begin{align*}
  &\frac{d}{dt}\left(\sqrt{Z(t)+Y_0}\right)
  =\left(2\sqrt{Z(t)+Y_0}\right)^{-1}\frac{d}{dt}Z(t)
  \leq\|R(., t)\|_{L^2(\R^n)}.
\end{align*}
Integrating this relation over $[0,t]$ we obtain, recalling $Z(0)=0$,
\begin{align*}
  \sqrt{Z(t)+Y_0}-\sqrt{Y_0}\leq \int_0^t \|R(., s)\|_{L^2(\R^n)}\,ds\,.
\end{align*}
By evaluating the square we find
\begin{align*}
  Z(t)+Y_0 &\leq \left(\sqrt{Y_0} +\int_0^t \|R(.,
    s)\|_{L^2(\R^n)}\,ds \right)^2\\
  &\leq 2Y_0 + 2\left(\int_0^t
    \|R(., s)\|_{L^2(\R^n)}\,ds\right)^2,
\end{align*}
and therefore the claimed result \eqref{eq:gronwall4}, since
$Y(t)\leq Z(t)+Y_0$ holds by assumption.

\bibliographystyle{abbrv}
\bibliography{lit_bloch}

\def\ocirc#1{\ifmmode\setbox0=\hbox{$#1$}\dimen0=\ht0 \advance\dimen0
  by1pt\rlap{\hbox to\wd0{\hss\raise\dimen0
  \hbox{\hskip.2em$\scriptscriptstyle\circ$}\hss}}#1\else {\accent"17
  #1}\fi}\def\cprime{$'$} \def\cprime{$'$}
\begin{thebibliography}{10}

\bibitem{MR3090137}
A.~Abdulle, M.~J. Grote, and C.~Stohrer.
\newblock F{E} heterogeneous multiscale method for long-time wave propagation.
\newblock {\em C. R. Math. Acad. Sci. Paris}, 351(11-12):495--499, 2013.

\bibitem{Allaire1992}
G.~Allaire.
\newblock Homogenization and two-scale convergence.
\newblock {\em SIAM J. Math. Anal.}, 23(6):1482--1518, 1992.

\bibitem{MR2060593}
G.~Allaire.
\newblock Dispersive limits in the homogenization of the wave equation.
\newblock {\em Ann. Fac. Sci. Toulouse Math. (6)}, 12(4):415--431, 2003.

\bibitem{MR1642454}
G.~Allaire, C.~Conca, and M.~Vanninathan.
\newblock The {B}loch transform and applications.
\newblock In {\em Actes du 29\`eme {C}ongr\`es d'{A}nalyse {N}um\'erique:
  {CAN}um'97 ({L}arnas, 1997)}, volume~3 of {\em ESAIM Proc.}, pages 65--84
  (electronic). Soc. Math. Appl. Indust., Paris, 1998.

\bibitem{MR2533955}
G.~Allaire, M.~Palombaro, and J.~Rauch.
\newblock Diffractive behavior of the wave equation in periodic media: weak
  convergence analysis.
\newblock {\em Ann. Mat. Pura Appl. (4)}, 188(4):561--589, 2009.

\bibitem{FrancfortMR1172450}
S.~Brahim-Otsmane, G.~A. Francfort, and F.~Murat.
\newblock Correctors for the homogenization of the wave and heat equations.
\newblock {\em J. Math. Pures Appl. (9)}, 71(3):197--231, 1992.

\bibitem{ZuazuaMR1760033}
C.~Castro and E.~Zuazua.
\newblock Low frequency asymptotic analysis of a string with rapidly
  oscillating density.
\newblock {\em SIAM J. Appl. Math.}, 60(4):1205--1233 (electronic), 2000.

\bibitem{CJ96}
G.~Cohen and P.~Joly.
\newblock Construction analysis of fourth-order finite difference schemes for
  the acoustic wave equation in nonhomogeneous media.
\newblock {\em SIAM J. Numer. Anal.}, 33(4):1266--1302, 1996.

\bibitem{MR1897707}
C.~Conca, R.~Orive, and M.~Vanninathan.
\newblock Bloch approximation in homogenization and applications.
\newblock {\em SIAM J. Math. Anal.}, 33(5):1166--1198 (electronic), 2002.

\bibitem{MR2219790}
C.~Conca, R.~Orive, and M.~Vanninathan.
\newblock On {B}urnett coefficients in periodic media.
\newblock {\em J. Math. Phys.}, 47(3):032902, 11, 2006.

\bibitem{MR1484944}
C.~Conca and M.~Vanninathan.
\newblock Homogenization of periodic structures via {B}loch decomposition.
\newblock {\em SIAM J. Appl. Math.}, 57(6):1639--1659, 1997.

\bibitem{DD13}
T.~Dohnal and W.~D\"orfler.
\newblock Coupled mode equation modeling for out-of-plane gap solitons in 2d
  photonic crystals.
\newblock {\em Multiscale Modeling \& Simulation}, 11(1):162--191, 2013.

\bibitem{DU09}
T.~Dohnal and H.~Uecker.
\newblock Coupled mode equations and gap solitons for the 2{D}
  {G}ross-{P}itaevskii equation with a non-separable periodic potential.
\newblock {\em Phys. D}, 238(9-10):860--879, 2009.

\bibitem{ChenFishMR2097759}
J.~Fish and W.~Chen.
\newblock Space-time multiscale model for wave propagation in heterogeneous
  media.
\newblock {\em Comput. Methods Appl. Mech. Engrg.}, 193(45-47):4837--4856,
  2004.

\bibitem{ChenFish-Uniformly}
J.~Fish, W.~Chen, and G.~Nagai.
\newblock Uniformly valid multiple spatial-temporal scale modeling for wave
  prpagation in heterogeneous media.
\newblock {\em Mechanics of Composite Materials and Structures}, 8:81--99,
  2001.

\bibitem{ChenFishMR1896977}
J.~Fish, W.~Chen, and G.~Nagai.
\newblock Non-local dispersive model for wave propagation in heterogeneous
  media: multi-dimensional case.
\newblock {\em Internat. J. Numer. Methods Engrg.}, 54(3):347--363, 2002.

\bibitem{ChenFishMR1896976}
J.~Fish, W.~Chen, and G.~Nagai.
\newblock Non-local dispersive model for wave propagation in heterogeneous
  media: one-dimensional case.
\newblock {\em Internat. J. Numer. Methods Engrg.}, 54(3):331--346, 2002.

\bibitem{FrancfortMurat1992}
G.~A. Francfort and F.~Murat.
\newblock Oscillations and energy densities in the wave equation.
\newblock {\em Comm. Partial Differential Equations}, 17(11-12):1785--1865,
  1992.

\bibitem{FrancuKrejciMR1727713}
J.~Franc{\ocirc{u}} and P.~Krej{\v{c}}{\'{\i}}.
\newblock Homogenization of scalar wave equations with hysteresis.
\newblock {\em Contin. Mech. Thermodyn.}, 11(6):371--390, 1999.

\bibitem{Lamacz-Disp}
A.~Lamacz.
\newblock Dispersive effective models for waves in heterogeneous media.
\newblock {\em Math. Models Methods Appl. Sci.}, 21(9):1871--1899, 2011.

\bibitem{Lamacz-Promotion}
A.~Lamacz.
\newblock {\em Waves in heterogeneous media: Long time behavior and dispersive
  models}.
\newblock PhD thesis, TU Dortmund, 2011.

\bibitem{Lebeau}
G.~Lebeau.
\newblock The wave equation with oscillating density: observability at low
  frequency.
\newblock {\em ESAIM Control Optim. Calc. Var.}, 5:219--258 (electronic), 2000.

\bibitem{OriveZuazua}
R.~Orive, E.~Zuazua, and A.~F. Pazoto.
\newblock Asymptotic expansion for damped wave equations with periodic
  coefficients.
\newblock {\em Math. Models Methods Appl. Sci.}, 11(7):1285--1310, 2001.

\bibitem{ReedSimon4}
M.~Reed and B.~Simon.
\newblock {\em Methods of modern mathematical physics. {IV}. {A}nalysis of
  operators}.
\newblock Academic Press, New York, 1978.

\bibitem{Sanchez}
E.~S{\'a}nchez-Palencia.
\newblock {\em Nonhomogeneous media and vibration theory}, volume 127 of {\em
  Lecture Notes in Physics}.
\newblock Springer-Verlag, Berlin, 1980.

\bibitem{SanSym}
F.~Santosa and W.~W. Symes.
\newblock A dispersive effective medium for wave propagation in periodic
  composites.
\newblock {\em SIAM J. Appl. Math.}, 51(4):984--1005, 1991.

\bibitem{MR2511805}
B.~Schweizer.
\newblock Homogenization of the {P}rager model in one-dimensional plasticity.
\newblock {\em Contin. Mech. Thermodyn.}, 20(8):459--477, 2009.

\bibitem{SchweizerVeneroni-Periodic}
B.~Schweizer and M.~Veneroni.
\newblock Periodic homogenization of the {Prandtl}--{Reuss} model with
  hardening.
\newblock {\em Journal of Multiscale Modelling}, 02(01n02):69--106, 2010.

\bibitem{Wilcox}
C.~H. Wilcox.
\newblock Theory of {B}loch waves.
\newblock {\em J. Analyse Math.}, 33:146--167, 1978.

\end{thebibliography}

\end{document}